\theoremstyle{plain}
\newtheorem{thm}{\protect\theoremname}
  \theoremstyle{definition}
  \newtheorem{defn}[thm]{\protect\definitionname}
  \theoremstyle{plain}
  \newtheorem{lem}[thm]{\protect\lemmaname}
  \theoremstyle{remark}
  \newtheorem{rem}[thm]{\protect\remarkname}
  \theoremstyle{plain}
  \newtheorem{prop}[thm]{\protect\propositionname}
\DeclareMathOperator{\Id}{\text{Id}}
  \providecommand{\definitionname}{Definition}
  \providecommand{\lemmaname}{Lemma}
  \providecommand{\propositionname}{Proposition}
  \providecommand{\remarkname}{Remark}
\providecommand{\theoremname}{Theorem}
\begin{document}

\title[Nonlinear KGM systems]{Nonlinear Klein-Gordon-Maxwell systems with Neumann boundary conditions
on a Riemannian manifold with boundary}

\thanks{The first author was  supported by Gruppo Nazionale per l'Analisi Matematica, la Probabilit\`{a} e le loro 
Applicazioni (GNAMPA) of Istituto Nazionale di Alta Matematica (INdAM)}

\author{Marco Ghimenti}
\address{M. Ghimenti, \newline Dipartimento di Matematica Universit\`a di Pisa
Largo B. Pontecorvo 5, 56100 Pisa, Italy}
\email{marco.ghimenti@dma.unipi.it}

\author{Anna Maria Micheletti}
\address{A. M. Micheletti, \newline Dipartimento di Matematica Universit\`a di Pisa
Largo B. Pontecorvo 5, 56100 Pisa, Italy}
\email{a.micheletti@dma.unipi.it.}

\keywords{Riemannian manifold with boundary, Klein Gordon Maxwell systems, Neumann boundary condition, Mean curvature, Liapounov Schmidt}

\subjclass[2010]{35J57,35J60,58E05,81V10}

\begin{abstract}
Let $(M,g)$ be a smooth compact, $n$ dimensional Riemannian manifold,
$n=3,4$ with smooth $n-1$ dimensional boundary $\partial M$ . We
search the positive solutions of the singularly perturbed Klein Gordon
Maxwell Proca system with homogeneous Neumann boundary conditions
or for the singularly perturbed Klein Gordon Maxwell system with mixed
Dirichlet Neumann homogeneous boundary conditions. We prove that $C^{1}$
stable critical points of the mean curvature of the boundary generates
$H^{1}(M)$ solutions when the perturbation parameter $\varepsilon$
is sufficiently small.
\end{abstract}

\maketitle

\section{Introduction}

Let $(M,g)$ be a smooth compact, $n$ dimensional Riemannian manifold,
$n=3,4$ with boundary $\partial M$ which is the union of a finite
number of connected, smooth, boundaryless, $n-1$ submanifolds embedded
in $M$. Here $g$ denotes the Riemannian metric tensor. By Nash theorem
we can consider $(M,g)$ as a regular submanifold embedded in $\mathbb{R}^{N}$. 

We search the positive solutions of the following Klein Gordon Maxwell
Proca system with homogeneous Neumann boundary conditions 
\begin{equation}
\left\{ \begin{array}{cc}
-\varepsilon^{2}\Delta_{g}u+au=|u|^{p-2}u+\omega^{2}(qv-1)^{2}u & \text{in }M\\
-\Delta_{g}v+(1+q^{2}u^{2})v=qu^{2} & \text{in }M\\
\frac{\partial u}{\partial\nu}=\frac{\partial v}{\partial\nu}=0 & \text{ on }\partial M
\end{array}\right.\label{eq:Pnn}
\end{equation}
or Klein Gordon Maxwell system with mixed Dirichlet Neumann homogeneous
boundary conditions 
\begin{equation}
\left\{ \begin{array}{cc}
-\varepsilon^{2}\Delta_{g}u+au=|u|^{p-2}u+\omega^{2}(qv-1)^{2}u & \text{in }M\\
-\Delta_{g}v+q^{2}u^{2}v=qu^{2} & \text{in }M\\
v=0 & \text{ on }\partial M\\
\frac{\partial u}{\partial\nu}=0. & \text{ on }\partial M
\end{array}\right.\label{eq:Pdn}
\end{equation}
Here $2<p<2^{*}=\frac{2n}{n-2}$, $\nu$ is the external normal to
$\partial M$, $a>0$, $q>0$, $\omega\in(-\sqrt{a},\sqrt{a})$ and
$\varepsilon$ is a positive perturbation parameter. 

We are interested in finding solutions $u,v\in H_{g}^{1}(M)$ to problem
(\ref{eq:Pnn}) and\textbf{ }(\ref{eq:Pdn}). Also, we show that,
for $\varepsilon$ sufficiently small, the function $u$ has a peak
near a stable critical point of the mean curvature of the boundary.
\begin{defn}
\label{def:stable}Let $f\in C^{1}(N,\mathbb{R})$, where $(N,g)$
is a Riemannian manifold. We say that $K\subset N$ is a $C^{1}$-stable
critical set of $f$ if $K\subset\left\{ x\in N\ :\ \nabla_{g}f(x)=0\right\} $
and for any $\mu>0$ there exists $\delta>0$ such that, if $h\in C^{1}(N,\mathbb{R})$
with 
\[
\max_{d_{g}(x,K)\le\mu}|f(x)-h(x)|+|\nabla_{g}f(x)-\nabla_{g}h(x)|\le\delta,
\]
then $h$ has a critical point $x_{0}$ with $d_{g}(x_{0},K)\le\mu$.
Here $d_{g}$ denotes the geodesic distance associated to the Riemannian
metric $g$.

Now we state the main theorem.\end{defn}
\begin{thm}
\label{thm:main}Assume $K\subset\partial M$ is a $C^{1}$-stable
critical set of the mean curvature of the boundary. Then there exists
$\varepsilon_{0}>0$ such that, for any $\varepsilon\in(0,\varepsilon_{0})$,
Problem (\ref{eq:Pnn}) has a solution $(u_{\varepsilon},v_{\varepsilon})\in H_{g}^{1}(M)\times H_{g}^{1}(M)$.
Analogously, problem (\ref{eq:Pdn}) has a solution $(u_{\varepsilon},v_{\varepsilon})\in H_{g}^{1}(M)\times H_{0,g}^{1}(M)$.
Moreover, the function $u_{\varepsilon}$ has a peak in some $\xi_{\varepsilon}\in\partial M$
which converges to a point $\xi_{0}\in K$ as $\varepsilon$ goes
to zero.
\end{thm}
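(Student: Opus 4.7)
The plan is to use a finite-dimensional Lyapunov--Schmidt reduction on a single peak concentrating at the boundary. First I would reduce the system to a single nonlinear equation in $u$: since the Maxwell equation is linear and coercive in $v$, for each $u\in H_g^1(M)$ it admits a unique solution $v=\Psi(u)$ with the prescribed boundary condition (Neumann for (\ref{eq:Pnn}), Dirichlet for (\ref{eq:Pdn})). One checks that the map $u\mapsto\Psi(u)$ is smooth and that the reduced functional $J_\varepsilon(u)=I_\varepsilon(u,\Psi(u))$, obtained by plugging $v=\Psi(u)$ into the natural action of the system, has critical points which correspond to solutions of (\ref{eq:Pnn}) or (\ref{eq:Pdn}). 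This standard reduction essentially turns the problem into a singularly perturbed scalar equation of Schr\"odinger type with an extra nonlocal term coming from $\Psi(u)$.

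Next I would build an approximate solution concentrating at a boundary point $\xi\in\partial M$. Using Fermi coordinates $(y,t)$ near $\xi$ (with $t\geq 0$ the distance to $\partial M$), I would rescale by $\varepsilon$ and transplant to $M$ the ground state $U$ of the limit problem
\begin{equation*}
-\Delta U + aU = U^{p-1} \quad\text{in } \mathbb{R}^n_+, \qquad \partial_tU=0 \text{ on } \partial\mathbb{R}^n_+,
\end{equation*}
which is nothing but the radial ground state on $\mathbb{R}^n$ restricted to the half-space thanks to the Neumann condition. This yields a one-parameter family of approximate solutions $W_{\varepsilon,\xi}$ indexed by $\xi\in\partial M$, together with the natural $n-1$ dimensional manifold of pseudo-critical points. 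The $u$-linearized operator at $W_{\varepsilon,\xi}$ has an approximate kernel spanned by the $(n-1)$ tangential derivatives $\partial_{\xi_i}W_{\varepsilon,\xi}$; on the orthogonal complement it is uniformly invertible, with norm estimates obtained by combining the invertibility of the limit operator on $\mathbb{R}^n_+$ with a contradiction/concentration argument. A standard contraction mapping then furnishes a remainder $\phi_{\varepsilon,\xi}\in W_{\varepsilon,\xi}^{\perp}$, of size $o(\varepsilon)$ in $H_g^1$, and depending in $C^1$ manner on $\xi$, so that $W_{\varepsilon,\xi}+\phi_{\varepsilon,\xi}$ solves the projected equation.

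The finite-dimensional reduction then amounts to finding critical points of $F_\varepsilon(\xi):=J_\varepsilon(W_{\varepsilon,\xi}+\phi_{\varepsilon,\xi})$ on $\partial M$. The decisive step is the asymptotic expansion
\begin{equation*}
F_\varepsilon(\xi) = C_0\,\varepsilon^n - C_1\,\varepsilon^{n+1}H(\xi) + o(\varepsilon^{n+1}) \qquad \text{in } C^1(\partial M),
\end{equation*}
where $H$ is the mean curvature of $\partial M$ and $C_0,C_1>0$. The leading term is just the energy of the limiting bubble on the half-space (which does not feel the geometry) and is $\xi$-independent; the curvature correction comes from expanding the volume element and the Laplace--Beltrami operator in Fermi coordinates and integrating against the decaying profile $U$. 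The term from $\Psi(u)$ gives a strictly higher-order contribution in $\varepsilon$ in both cases (this uses elliptic estimates on $\Psi$ which are uniform in $\varepsilon$, and in the Dirichlet case the Poincar\'e inequality), so it does not interfere with the $\varepsilon^{n+1}H(\xi)$ term. This is where the bulk of the technical work sits, and it is also where the two different boundary conditions on $v$ must be distinguished.

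Once the expansion is established, the conclusion is immediate from the definition of $C^1$-stable critical set: taking $h=-C_1^{-1}\varepsilon^{-(n+1)}(F_\varepsilon-C_0\varepsilon^n)$ on a tubular neighborhood $\{d_g(\cdot,K)\leq\mu\}$ in $\partial M$, the $C^1$ convergence above gives $\|h-H\|_{C^1}\to 0$ as $\varepsilon\to 0^+$, so Definition~\ref{def:stable} produces a critical point $\xi_\varepsilon$ of $F_\varepsilon$ with $d_g(\xi_\varepsilon,K)\leq\mu$. The pair $(W_{\varepsilon,\xi_\varepsilon}+\phi_{\varepsilon,\xi_\varepsilon},\Psi(W_{\varepsilon,\xi_\varepsilon}+\phi_{\varepsilon,\xi_\varepsilon}))$ is then the desired solution, and the concentration statement for $u_\varepsilon$ is read off from the construction of $W_{\varepsilon,\xi}$. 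Positivity of $u_\varepsilon$ follows from the positivity of the bubble together with the smallness of $\phi_{\varepsilon,\xi}$ and a maximum principle argument on the full system. The main obstacle, as indicated, is the $C^1$ expansion of $F_\varepsilon$: one must control not only the value but also the tangential derivative of $J_\varepsilon(W_{\varepsilon,\xi}+\phi_{\varepsilon,\xi})$ in $\xi$, which requires differentiating the remainder $\phi_{\varepsilon,\xi}$ and carefully handling the boundary geometry in Fermi coordinates.
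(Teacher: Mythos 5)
Your proposal follows essentially the same Lyapunov--Schmidt strategy as the paper: Benci--Fortunato reduction to a single equation via the map $\psi$ (your $\Psi$), concentration at a boundary point using Fermi coordinates, a finite-dimensional reduction with the projected equation solved by a contraction, a $C^1$ expansion of the reduced functional with the mean curvature entering at order $\varepsilon$ (after the $\varepsilon^{-n}$ normalization the paper uses), and the conclusion via Definition~\ref{def:stable}. Two small corrections: the limit equation for the bubble should read $-\Delta U + (a-\omega^2)U = U^{p-1}$, not $-\Delta U + aU = U^{p-1}$; the term $\omega^2(qv-1)^2u$ contributes the zeroth-order term $\omega^2 u$ to the mass (from the constant $1$ in $(qv-1)^2$), so the effective frequency is $a-\omega^2$, which is why the hypothesis $\omega\in(-\sqrt a,\sqrt a)$ is needed, and this correction is not part of the "higher order" Maxwell contribution you allude to. Also, the paper quantifies $\|\phi_{\varepsilon,\xi}\|_{\varepsilon} \le C\varepsilon^2$ (hence $\|\phi_{\varepsilon,\xi}\|_{H^1_g} = O(\varepsilon^{5/2})$ for $n=3$), which is slightly sharper than "$o(\varepsilon)$ in $H_g^1$" and is what is actually needed to prove that the coupling functional $G_\varepsilon$ is $o(\varepsilon)$ in the $C^1$ sense.
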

From the seminal paper of \cite{BF} many authors studied KGM systems
on a flat domain. We cite \cite{AP2,C,DW2,DM2,DPS2,DP,DPS1,Mu} .

For KGM and KGMP system on Riemannian manifolds, as far as we know
the first paper in which deals with this problem is by Druet and Hebey
\cite{DH}. In this work the authors study the case $\varepsilon=1$
and prove the existence of a solution for KGMP systems on a closed
manifold, by the mountain pass theorem. Thereafter several works are
devoted to the study of KGMP system on Riemaniann closed manifold.
We limit ourself to cite \cite{HT,HW,CGMkg,GMkg,GMP}. 

Klein Gordon Maxwell system provides a model for a particle $u$ interacting
with its own electrostatic field $v$. Thus, is somewhat more natural
to prescribe Neumann condition on the second equation as d'Avenia
Pisani and Siciliano nicely explained in the introduction of \cite{DPS2}. 

So, recently we moved to study KGMP systems in a Riemaniann manifold
$M$ with boundary $\partial M$ with Neumann boundary condition on
the second equation. In \cite{GMbordopreprint} the authors proved
that the topological properties of the boundary $\partial M$, namely
the Lusternik Schnirelmann category of the boundary, affects the number
of the low energy solution for the systems. Also, we notice that the
natural dimension for KGM and KGMP systems is $n=3$, since this systems
arises from a physical model. However, the case $n=4$ is interesting
from a mathematical point of view, since the second equation of systems
(\ref{eq:Pnn}) and (\ref{eq:Pdn}) becomes energy critical by the
presence of the $u^{2}v$ term. For further comments on this subject,
we refer to \cite{HT}

We can compare \cite{GMbordopreprint} and Theorem \ref{thm:main}.
In \cite{GMjfpta} we proved that the set of metrics for which the
mean curvature has only nondegenerate critical points is an open dense
set among all the $C^{k}$ metrics on $M$, $k\ge3$. Thus, generically
with respect to the metric, the mean curvature has $P_{1}(\partial M)$
nondegenerate (hence stable) critical points, where $P_{1}(\partial M)$
is the Poincar\'e polynomial of $\partial M$, namely $P_{t}(\partial M)$,
evaluated in $t=1$. Hence, generically with respect to metric, Problem
(\ref{eq:Pnn}) has $P_{1}(\partial M)$ solution and holds $P_{1}(\partial M)\ge\text{cat}\partial M$.
Also, in many cases the strict inequality $P_{1}(\partial M)>\text{cat}\partial M$
holds.

The paper is organized as follows. In Section \ref{sec:prel} we summarize
some result that are necessary to frame the problem. Namely, we recall
some well known notion of Remannian geometry, we introduce the variational
setting and we study some properties of the second equation of the
systems. In Section \ref{sec:reduction} we perform the finite dimensional
reduction and we sketch the prove of Theorem \ref{thm:main}. A collection
of technical results is contained in Appendix \ref{app}.

\section{\label{sec:prel}Preliminary results}

We recall some well known fact about Riemannian manifold with boundary.

First of all we define the Fermi coordinate chart. 
\begin{defn}
If $q$ belongs to the boundary $\partial M$, let $\bar{y}=\left(z_{1},\dots,z_{n-1}\right)$
be Riemannian normal coordinates on the $n-1$ manifold $\partial M$
at the point $q$. For a point $\xi\in M$ close to $q$, there exists
a unique $\bar{\xi}\in\partial M$ such that $d_{g}(\xi,\partial M)=d_{g}(\xi,\bar{\xi})$.
We set $\bar{z}(\xi)\in\mathbb{R}^{n-1}$ the normal coordinates for
$\bar{\xi}$ and $z_{n}(\xi)=d_{g}(\xi,\partial M)$. Then we define
a chart $\Psi_{q}^{\partial}:\mathbb{R}_{+}^{n}\rightarrow M$ such
that $\left(\bar{z}(\xi),z_{n}(\xi)\right)=\left(\Psi_{q}^{\partial}\right)^{-1}(\xi)$.
These coordinates are called \emph{Fermi coordinates} at $q\in\partial M$.
The Riemannian metric $g_{q}\left(\bar{z},z_{n}\right)$ read through
the Fermi coordinates satisfies $g_{q}(0)=\Id$. 
\end{defn}
We note by $d_{g}^{\partial}$ and $\exp^{\partial}$ respectively
the geodesic distance and the exponential map on by $\partial M$.
By compactness of $\partial M$, there is an $R^{\partial}$ and a
finite number of points $q_{i}\in\partial M$, $i=1,\dots,k$ such
that 
\[
I_{q_{i}}(R^{\partial},R_{M}):=\left\{ x\in M,\, d_{g}(x,\partial M)=d_{g}(x,\bar{\xi})<R_{M},\, d_{g}^{\partial}(q_{i},\bar{\xi})<R^{\partial}\right\} 
\]
 form a covering of $\left(\partial M\right)_{\rho}$ and on every
$I_{q_{i}}$ the Fermi coordinates are well defined. In the following
we choose, $R=\min\left\{ R^{\partial},R_{M}\right\} $, such that
we have a finite covering 
\[
M\subset\left\{ \cup_{i=1}^{k}B(q_{i},R)\right\} \bigcup\left\{ \cup_{i=k+1}^{l}I_{\xi_{i}}(R,R)\right\} 
\]
where $k,l\in\mathbb{N}$, $q_{i}\in M\smallsetminus\partial M$ and
$\xi_{i}\in\partial M$.

Given the Fermi coordinates in a neighborhood of $p$, and we denoted
by the matrix $(h_{ij})_{i,j=1,\dots,n-1}$ the second fundamental
form, we have the well known formulas (see \cite{BP05,Es92})
\begin{eqnarray}
g^{ij}(y) & = & \delta_{ij}+2h_{ij}(0)y_{n}+O(|y|^{2})\text{ for }i,j=1,\dots n-1\label{eq:g1}\\
g^{in}(y) & = & \delta_{in}\label{eq:g2}\\
\sqrt{g}(y) & = & 1-(n-1)H(0)y_{n}+O(|y|^{2})\label{eq:g3}
\end{eqnarray}
where $(y_{1},\dots,y_{n})$ are the Fermi coordinates and the mean
curvature $H$ is 
\begin{equation}
H=\frac{1}{n-1}\sum_{i}^{n-1}h_{ii}\label{eq:H}
\end{equation}

To solve our system, using an idea of Benci and Fortunato \cite{BF},
we reduce the system to a single equation. We introduce the map $\psi$
defined by the equation
\begin{equation}
\left\{ \begin{array}{cc}
-\Delta_{g}\psi+(1+q^{2}u^{2})\psi=qu^{2} & \text{ in }M\\
\frac{\partial\psi}{\partial\nu}=0 & \text{on }\partial M
\end{array}\right.\label{eq:ei-N}
\end{equation}
in case of Neumann boundary condition or by
\begin{equation}
\left\{ \begin{array}{cc}
-\Delta_{g}\psi+qu^{2}\psi=qu^{2} & \text{ in }M\\
\psi=0 & \text{on }\partial M
\end{array}\right.\label{eq:ei-D}
\end{equation}
in case of Dirichlet boundary condition. 

In what follows we call $H=H_{g}^{1}$ for the Neumann problem and
$H=H_{0,g}^{1}$ for the Dirichlet problem. Thus with abuse of language
we will say that $\psi:H\rightarrow H$ in both (\ref{eq:ei-N}) and
(\ref{eq:ei-D}). Moreover, from standard variational arguments, it
easy to see that $\psi$ is well-defined in $H$ and it holds 
\begin{equation}
0\le\psi(u)\le1/q\label{psipos}
\end{equation}
for all $u\in H$. We collect now some well known result on the map
$\psi$. For a more extensive presentation of these properties we
refer to \cite{DH}
\begin{lem}
\label{lem:e1}The map $\psi:H\rightarrow H$ is $C^{2}$ and its
differential $\psi'(u)[h]=V_{u}[h]$ at $u$ is the map defined by
\begin{equation}
-\Delta_{g}V_{u}[h]+(1+q^{2}u^{2})V_{u}[h]=2qu(1-q\psi(u))h\text{ for all }h\in H.\label{eq:e2}
\end{equation}
in case of Neumann boundary condition or 
\begin{equation}
-\Delta_{g}V_{u}[h]+q^{2}u^{2}V_{u}[h]=2qu(1-q\psi(u))h\text{ for all }h\in H.\label{eq:e2-2}
\end{equation}
in case of Dirichlet boundary condition.

Also, we have 
\[
0\le\psi'(u)[u]\le\frac{2}{q}.
\]
Finally, the second derivative $(h,k)\rightarrow\psi''(u)[h,k]=T_{u}(h,k)$
is the map defined by the equation 
\[
-\Delta_{g}T_{u}(h,k)+(1+q^{2}u^{2})T_{u}(h,k)=-2q^{2}u(kV_{u}(h)+hV_{u}(k))+2q(1-q\psi(u))hk
\]
in case of Neumann boundary condition or
\[
-\Delta_{g}T_{u}(h,k)+q^{2}u^{2}T_{u}(h,k)=-2q^{2}u(kV_{u}(h)+hV_{u}(k))+2q(1-q\psi(u))hk
\]
in case of Dirichlet boundary condition.
\end{lem}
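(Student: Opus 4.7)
The plan has four ingredients: well-posedness of $\psi(u)$, $C^{2}$-smoothness of $u\mapsto\psi(u)$, the explicit PDEs for the first and second differentials, and the a priori bounds on $\psi'(u)[u]$. I will handle the first two together via the implicit function theorem, the third by differentiating the defining identity, and the fourth via a maximum-principle comparison between $V_{u}[u]$ and $2\psi(u)$.

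For well-posedness and smoothness I would define $F\colon H\times H\to H^{*}$ by letting $\langle F(u,\psi),\varphi\rangle$ be the weak form of $-\Delta_{g}\psi+(1+q^{2}u^{2})\psi-qu^{2}$ (with the constant $1$ dropped in the Dirichlet setting), so that $\psi(u)$ is characterized by $F(u,\psi(u))=0$. By Sobolev embedding $H^{1}\hookrightarrow L^{2^{*}}$ with $2^{*}\in\{6,4\}$ for $n=3,4$, H\"older's inequality ensures every term is continuously trilinear in $u,\psi,\varphi$, so $F$ is polynomial in $(u,\psi)$ and hence $C^{\infty}$. The partial derivative $\partial_{\psi}F(u,\psi)\colon\varphi\mapsto -\Delta_{g}\varphi+(1+q^{2}u^{2})\varphi$ (respectively $\varphi\mapsto -\Delta_{g}\varphi+q^{2}u^{2}\varphi$) is coercive on $H$ by positivity of the potential in the Neumann case and by Poincar\'e in the Dirichlet case, so Lax--Milgram yields existence and uniqueness of $\psi(u)$, and the implicit function theorem then gives $\psi\in C^{\infty}(H,H)\subset C^{2}(H,H)$.

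The equations for $V_{u}[h]$ and $T_{u}(h,k)$ then come from differentiating the identity $-\Delta_{g}\psi(u)+(1+q^{2}u^{2})\psi(u)=qu^{2}$ once and twice in $u$. One differentiation in direction $h$ yields $-\Delta_{g}V_{u}[h]+2q^{2}uh\,\psi(u)+(1+q^{2}u^{2})V_{u}[h]=2quh$, which rearranges to \eqref{eq:e2}; a second differentiation in direction $k$ produces the stated equation for $T_{u}(h,k)$ once the terms $2q^{2}u(kV_{u}[h]+hV_{u}[k])$ are collected on the right-hand side. The boundary conditions on $V_{u}$ and $T_{u}$ are inherited from those on $\psi$ by differentiating the Neumann (resp.\ Dirichlet) condition. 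The Dirichlet case is identical with $1+q^{2}u^{2}$ replaced by $q^{2}u^{2}$ throughout.

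Finally the bounds on $\psi'(u)[u]$ come from the maximum principle. Choosing $h=u$ in \eqref{eq:e2} gives $-\Delta_{g}V_{u}[u]+(1+q^{2}u^{2})V_{u}[u]=2qu^{2}(1-q\psi(u))$, whose right-hand side is nonnegative by \eqref{psipos}; testing against $-(V_{u}[u])^{-}\in H$ and using the boundary condition to kill the boundary term forces $V_{u}[u]\ge 0$. For the upper bound, the key trick is to compare with $2\psi(u)$: the function $w:=V_{u}[u]-2\psi(u)$ satisfies
\begin{equation*}
-\Delta_{g}w+(1+q^{2}u^{2})w=2qu^{2}(1-q\psi(u))-2qu^{2}=-2q^{2}u^{2}\psi(u)\le 0,
\end{equation*}
with homogeneous Neumann (resp.\ Dirichlet) data, and testing with $w^{+}$ gives $w\le 0$, whence $V_{u}[u]\le 2\psi(u)\le 2/q$ by \eqref{psipos}. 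The only real subtlety I expect is the coercivity of $\partial_{\psi}F$ in the Dirichlet case, where the zeroth-order term may vanish on a large set, but this is standard via Poincar\'e; everything else reduces to implicit differentiation and $L^{2}$-energy estimates analogous to those used in the closed-manifold version in \cite{DH}.
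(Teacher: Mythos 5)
Your proof is correct, and it supplies an actual argument where the paper itself does not: at this point the text only says ``For the proofs of these results we refer to\ \ldots, in which the case of KGMP is treated'' (with an apparently dropped citation, most plausibly \cite{DH}). So there is no in-paper proof to compare against; your reconstruction, however, is exactly the standard one. The Lax--Milgram/implicit-function-theorem step is sound (coercivity of $\partial_{\psi}F$ is immediate from the $1$ in the Neumann potential and from Poincar\'e in the Dirichlet case, and the quadrilinear form $\int u^{2}\psi\varphi\,d\mu_{g}$ is bounded on $H^{1}$ for $n=3,4$ by H\"older and Sobolev embedding), the formulas for $V_{u}[h]$ and $T_{u}(h,k)$ follow by implicit differentiation exactly as you wrote, and the two-sided bound on $\psi'(u)[u]$ via the comparison $w=V_{u}[u]-2\psi(u)$ and testing with $w^{+}$ (resp.\ $-(V_{u}[u])^{-}$) is the clean way to get $0\le V_{u}[u]\le 2\psi(u)\le 2/q$ using (\ref{psipos}). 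Only a cosmetic remark: ``trilinear'' should be ``quadrilinear'' for $\int u^{2}\psi\varphi$, and you should make explicit that in the Dirichlet lower-bound step, vanishing of $\int_{M}|\nabla v^{-}|^{2}\,d\mu_{g}$ combined with $v^{-}\in H^{1}_{0,g}$ forces $v^{-}\equiv 0$ via Poincar\'e, since the zeroth-order term $q^{2}u^{2}(v^{-})^{2}$ alone does not control the $L^{2}$ norm.
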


\begin{lem}
\label{lem:e2}The map $\Theta:H\rightarrow\mathbb{R}$ given by 
\[
\Theta(u)=\frac{1}{2}\int_{M}(1-q\psi(u))u^{2}d\mu_{g}
\]
is $C^{2}$ and 
\[
\Theta'(u)[h]=\int_{M}(1-q\psi(u))^{2}uhd\mu_{g}
\]
for any $u,h\in H$ 
\end{lem}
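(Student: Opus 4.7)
The $C^{2}$ regularity of $\Theta$ is immediate from the previous lemma: since $u\mapsto\psi(u)$ is $C^{2}$ from $H$ to $H$ and the remaining operations ($u\mapsto u^{2}$, multiplication in $L^{q}$-type spaces, integration) are smooth, the composition
\[
\Theta(u)=\tfrac{1}{2}\int_{M}u^{2}d\mu_{g}-\tfrac{q}{2}\int_{M}\psi(u)u^{2}d\mu_{g}
\]
defines a $C^{2}$ map from $H$ to $\mathbb{R}$. The Sobolev embeddings (recall $n=3,4$, so $u^{2}h\in L^{1}$ by H\"older since $H\hookrightarrow L^{4}$, resp.\ $L^{3}$) guarantee that the integrals make sense and depend continuously on $u$.

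The plan for the derivative formula is to differentiate directly and then use the defining PDE of $\psi$ as a test identity. A formal differentiation gives
\[
\Theta'(u)[h]=\int_{M}(1-q\psi(u))uh\, d\mu_{g}-\tfrac{q}{2}\int_{M}V_{u}[h]\, u^{2}d\mu_{g},
\]
where $V_{u}[h]=\psi'(u)[h]$. Comparing with the target expression $\int_{M}(1-q\psi(u))^{2}uh\, d\mu_{g}$, what remains to prove is the algebraic identity
\begin{equation*}
\int_{M}V_{u}[h]\, u^{2}\, d\mu_{g}=2\int_{M}(1-q\psi(u))\psi(u)\, uh\, d\mu_{g}.
\end{equation*}

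To establish this, I would test the equation \eqref{eq:e2} for $V_{u}[h]$ against $\psi(u)$, and the equation \eqref{eq:ei-N} for $\psi(u)$ against $V_{u}[h]$; subtracting (or equivalently integrating by parts twice) the Laplacian terms and the symmetric zeroth-order terms $(1+q^{2}u^{2})V_{u}[h]\psi(u)$ cancel, leaving
\[
\int_{M}2qu(1-q\psi(u))h\,\psi(u)\, d\mu_{g}=\int_{M}qu^{2}\, V_{u}[h]\, d\mu_{g},
\]
which is exactly the desired identity. The integration by parts on $-\Delta_{g}$ produces no boundary contribution: in the Neumann case both $\psi(u)$ and $V_{u}[h]$ satisfy $\partial_{\nu}=0$ on $\partial M$, while in the Dirichlet case both vanish on $\partial M$ (this last point is the only place the two variants differ, and in either case the boundary terms are zero).

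The computation is essentially routine, and the only mild subtlety is verifying that $V_{u}[h]$ is a legitimate test function (hence in $H_{g}^{1}$, resp.\ $H_{0,g}^{1}$), which is already guaranteed by Lemma \ref{lem:e1}. No step presents a real obstacle beyond organizing the integration by parts.
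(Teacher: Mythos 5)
The proposal is correct. The paper does not actually give a proof of this lemma (it defers to the reference that treats the KGMP case — the citation is missing in the source but is presumably \cite{DH}), so there is no in-paper argument to compare against; your approach is, however, the standard one for such coupled systems: differentiate $\Theta$ directly to get a term $-\tfrac{q}{2}\int_M V_u[h]\,u^2$, then eliminate $V_u[h]$ by exploiting the symmetry of the bilinear form associated to $-\Delta_g + (1+q^2u^2)$ (testing \eqref{eq:e2} against $\psi(u)$ and \eqref{eq:ei-N} against $V_u[h]$; the left-hand sides coincide, and the claimed identity drops out upon dividing by $q$). Your observation that the boundary terms vanish in both the Neumann case ($\partial_\nu\psi=\partial_\nu V_u[h]=0$) and the Dirichlet case ($\psi=V_u[h]=0$ on $\partial M$) is exactly the right point to flag. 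One small remark: for the Dirichlet variant you implicitly use the operator $-\Delta_g + q^2u^2$ for \emph{both} $\psi(u)$ and $V_u[h]$, which is the correct, consistent choice (matching \eqref{eq:Pdn} and \eqref{eq:e2-2}); as written, \eqref{eq:ei-D} in the paper has a typo ($qu^2\psi$ instead of $q^2u^2\psi$), and taking it literally would break the cancellation of the zeroth-order terms, so it is worth noting that you are working with the corrected equation.
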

For the proofs of these results we refer to , in which the case of
KGMP is treated. For KGM systems, the proof is identical.

Now, we introduce the functionals $I_{\varepsilon},J_{\varepsilon},G_{\varepsilon}:H\rightarrow\mathbb{R}$
\begin{equation}
I_{\varepsilon}(u)=J_{\varepsilon}(u)+\frac{\omega^{2}}{2}G_{\varepsilon}(u),\label{ieps-1}
\end{equation}
where 
\begin{equation}
J_{\varepsilon}(u):=\frac{1}{2\varepsilon^{n}}\int\limits _{M}\left[\varepsilon^{2}|\nabla_{g}u|^{2}+(a-\omega^{2})u^{2}\right]d\mu_{g}-\frac{1}{p\varepsilon^{n}}\int\limits _{M}\left(u^{+}\right)^{p}d\mu_{g}\label{jieps-1}
\end{equation}
and
\begin{equation}
G_{\varepsilon}(u):=\frac{1}{\varepsilon^{n}}q\int_{M}\psi(u)u^{2}d\mu_{g}.\label{geps-1}
\end{equation}
By Lemma \ref{lem:e2} we deduce that 
\begin{equation}
\frac{1}{2}G_{\varepsilon}'(u)[\varphi]=\frac{1}{\varepsilon^{n}}\int_{M}[2q\psi(u)-q^{2}\psi^{2}(u)]u\varphi d\mu_{g}.\label{eq:gprimo}
\end{equation}
If $u\in H$ is a critical point of $I_{\varepsilon}$ then the pair
$(u,\psi(u))$ is the desired solution of Problem (\ref{eq:Pnn})
or (\ref{eq:Pdn}).

Finally, we introduce a model function for the solution $u$. It is
well known that, in $\mathbb{R}^{n}$, there is a unique positive
radially symmetric function $V(z)\in H^{1}(\mathbb{R}^{n})$ satisfying
\begin{equation}
-\Delta V+(a-\omega^{2})V=V^{p-1}\text{ on }\mathbb{R}^{n}.\label{eq:rn}
\end{equation}
 Moreover, the function $V$ exponentially decays at infinity as well
as its derivative, that is, for some $c>0$ 
\begin{eqnarray*}
\lim_{|z|\rightarrow\infty}V(|z|)|z|^{\frac{n-1}{2}}e^{|z|}=c &  & \lim_{|z|\rightarrow\infty}V'(|z|)|z|^{\frac{n-1}{2}}e^{|z|}=-c.
\end{eqnarray*}
We can define on the half space $\mathbb{R}_{+}^{n}=\left\{ (z_{1,}\dots,z_{n})\in\mathbb{R}^{n}\ ,\ z_{n}\ge0\right\} $
the function
\[
U(x)=\left.V\right|_{x_{n}\ge0}.
\]
 The function $U$satisfies the following Neumann problem in $\mathbb{R}_{+}^{n}$
\begin{equation}
\left\{ \begin{array}{cc}
-\Delta U+(a-\omega^{2})U=U^{p-1} & \text{in }\mathbb{R}_{+}^{n}\\
{\displaystyle \frac{\partial U}{\partial z_{n}}=0} & \text{on }\left\{ z_{n}=0\right\} .
\end{array}\right.\label{eq:P-Rn}
\end{equation}
and it is easy to see that the space solution of the linearized problem
\begin{equation}
\left\{ \begin{array}{cc}
-\Delta\varphi+(a-\omega^{2})\varphi=(p-1)U^{p-2}\varphi & \text{ in }\mathbb{R}_{+}^{n}\\
{\displaystyle \frac{\partial\varphi}{\partial z_{n}}=0} & \text{on }\left\{ z_{n}=0\right\} .
\end{array}\right.\label{eq:linear}
\end{equation}
 is generated by the linear combination of 
\[
\varphi^{i}=\frac{\partial U}{\partial z_{i}}(z)\text{ for }i=1,\dots,n-1.
\]
We endow $H_{g}^{1}(M)$ with the scalar product ${\displaystyle \left\langle u,v\right\rangle _{\varepsilon}:=\frac{1}{\varepsilon^{n}}\int_{M}\varepsilon^{2}\nabla_{g}u\nabla_{g}v+(a-\omega^{2})uvd\mu_{g}}$
and the norm $\|u\|_{\varepsilon}=\left\langle u,u\right\rangle _{\varepsilon}^{1/2}$.
We call $H_{\varepsilon}$ the space $H_{g}^{1}$ equipped with the
norm $\|\cdot\|_{\varepsilon}$. We also define $L_{\varepsilon}^{p}$
as the space $L_{g}^{p}(M)$ endowed with the norm ${\displaystyle |u|_{\varepsilon,p}=\frac{1}{\varepsilon^{n}}\left(\int_{M}u^{p}d\mu_{g}\right)^{1/p}}$. 

For any $p\in[2,2^{*})$, the embedding $i_{\varepsilon}:H_{\varepsilon}\hookrightarrow L_{\varepsilon,p}$
is a compact, continuous map, and it holds $|u|_{\varepsilon,p}\le c\|u\|_{\varepsilon}$
for some constant $c$ not depending on $\varepsilon$. We define
the adjoint operator $i_{\varepsilon}^{*}:L_{\varepsilon,p'}:\hookrightarrow H_{\varepsilon}$
as 
\[
u=i_{\varepsilon}^{*}(v)\ \Leftrightarrow\ \left\langle u,\varphi\right\rangle _{\varepsilon}=\frac{1}{\varepsilon^{n}}\int_{M}v\varphi d\mu_{g}.
\]
Now on set
\[
f(u)=|u^{+}|^{p-1}
\]
and 
\[
g(u):=\left(q^{2}\psi^{2}(u)-2q\psi(u)\right)u.
\]
we can rewrite problem (\ref{eq:Pnn}) in an equivalent formulation
\[
u=i_{\varepsilon}^{*}\left[f(u)+\omega^{2}g(u)\right],\ u\in H_{\varepsilon}.
\]
 
\begin{rem}
\label{rem:ieps}We have that $\|i_{\varepsilon}^{*}(v)\|_{\varepsilon}\le c|v|_{p',\varepsilon}$
with $c$ independent by $\varepsilon$.
\end{rem}

\begin{rem}
\label{rem:normaeps}We recall the following two estimates, that can
be obtained by trivial computations
\begin{align}
\|u\|_{H_{g}^{1}} & \le c\varepsilon^{\frac{1}{2}}\|u\|_{\varepsilon}\text{ for }n=3\label{eq:norma1}\\
\|u\|_{H_{g}^{1}} & \le c\varepsilon\|u\|_{\varepsilon}\text{ for }n=4\label{eq:norma2}
\end{align}
We often will use the estimate (\ref{eq:norma1}) also when $n=4$,
which is still true even if weaker, to simplify the exposition.
\end{rem}
Finally, we define an important class of functions on the manifold,
modeled on the function $U$. For all $\xi\in\partial M$ we define
\[
W_{\varepsilon,\xi}=\left\{ \begin{array}{ccc}
U_{\varepsilon}\left(\left(\Psi_{\xi}^{\partial}\right)^{-1}(x)\right)\chi_{R}\left(\left(\Psi_{\xi}^{\partial}\right)^{-1}(x)\right) &  & x\in I_{\xi}(R):=I_{\xi}(R,R);\\
0 &  & \text{elsewhere}.
\end{array}\right.
\]
We recall a fundamental limit property for the function $W_{\varepsilon,\xi}$.
\begin{rem}
\label{remark:Weps}Since $U$ decays exponentially, it holds, uniformly
with respect to $q\in\partial M$, 
\begin{equation}
\lim_{\varepsilon\rightarrow0}\left|W_{\varepsilon,\xi}\right|_{t,\varepsilon}^{t}=\int_{\mathbb{R}_{+}^{n}}U^{t}(z)dz\label{eql2-1}
\end{equation}
for all $1\le t\le2^{*}$, and
\begin{equation}
\lim_{\varepsilon\rightarrow0}\varepsilon^{2}\left|\nabla_{g}W_{\varepsilon,\xi}\right|_{2,\varepsilon}^{2}=\int_{\mathbb{R}_{+}^{n}}\left\vert \nabla U\right\vert ^{2}(z)dz\label{eqgrad-1}
\end{equation}
We also have the following estimate for the function $\psi$ and for
its differential $\psi'$. \end{rem}
\begin{lem}
It holds, for any $\varphi\in H$ and for any $\xi\in\partial M$
\begin{equation}
\|\psi(W_{\varepsilon,\xi}+\varphi)\|_{H}\le c_{1}\left(\varepsilon^{\frac{n+2}{2}}+\|\varphi\|_{H}^{2}\right)\label{eq:phiweps-1}
\end{equation}
\begin{equation}
\|\psi(W_{\varepsilon,\xi}+\varphi)\|_{H}\le c_{2}\varepsilon^{\frac{n+2}{2}}\left(1+\|\varphi\|_{\varepsilon}^{2}\right)\label{eq:phiweps-2}
\end{equation}
for some positive constants $c_{1},c_{2}$, when $\varepsilon$ is
sufficiently small.\end{lem}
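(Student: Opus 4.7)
The plan is to test the defining equation of $\psi = \psi(W_{\varepsilon,\xi}+\varphi)$ against $\psi$ itself. Writing $u = W_{\varepsilon,\xi}+\varphi$, in the Neumann case \eqref{eq:ei-N} this gives
\[
\int_M \bigl(|\nabla_g \psi|^2 + \psi^2 + q^2 u^2 \psi^2\bigr)\, d\mu_g = q\int_M u^2 \psi \, d\mu_g,
\]
so, after discarding the nonnegative term $q^2u^2\psi^2$, one gets $\|\psi\|_H^2 \le q\int_M u^2\psi\, d\mu_g$. The Dirichlet case \eqref{eq:ei-D} is handled identically, using the Poincar\'e inequality in $H^1_{0,g}$ to recover the full $H^1$ norm on the left.

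Next I apply H\"older with the conjugate pair $(r,s) = \bigl(4n/(n+2),\, 2n/(n-2)\bigr)$, chosen so that $s=2^{*}$ is the critical Sobolev exponent:
\[
q\int_M u^2\psi\,d\mu_g \le q\,|u|_{L^r}^2\,|\psi|_{L^{2^*}} \le C\,|u|_{L^r}^2\,\|\psi\|_H,
\]
where the last inequality uses the Sobolev embedding $H^1_g(M)\hookrightarrow L^{2^{*}}_g(M)$. Dividing by $\|\psi\|_H$ yields the central estimate
\[
\|\psi(u)\|_H \;\le\; C\,|u|_{L^{4n/(n+2)}}^2.
\]

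The final step is to split $u = W_{\varepsilon,\xi}+\varphi$ and bound $|u|_{L^r}^2 \le 2|W_{\varepsilon,\xi}|_{L^r}^2 + 2|\varphi|_{L^r}^2$. The change of variables implicit in Remark \ref{remark:Weps} gives $|W_{\varepsilon,\xi}|_{L^r}^r \le C\varepsilon^{n}$, hence
\[
|W_{\varepsilon,\xi}|_{L^r}^2 \le C\varepsilon^{2n/r} = C\varepsilon^{(n+2)/2}.
\]
For the remainder, to prove \eqref{eq:phiweps-1} I invoke the Sobolev embedding $H^1_g\hookrightarrow L^r_g$, valid because $r = 4n/(n+2)\le 2^{*}$ for $n\le 4$, obtaining $|\varphi|_{L^r}^2 \le C\|\varphi\|_H^2$. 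To prove \eqref{eq:phiweps-2} I instead use the scaling identity $|\varphi|_{L^r}^2 = \varepsilon^{2n/r}|\varphi|_{\varepsilon,r}^2 = \varepsilon^{(n+2)/2}|\varphi|_{\varepsilon,r}^2$ together with the $\varepsilon$-uniform embedding $|\cdot|_{\varepsilon,r}\le c\|\cdot\|_\varepsilon$ recorded just before Remark \ref{rem:ieps}. Combining each bound with the estimate on $|W_{\varepsilon,\xi}|_{L^r}^2$ gives the two announced inequalities.

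The only real obstacle is pinning down the H\"older exponent $r = 4n/(n+2)$: this is the unique choice so that $2n/r = (n+2)/2$ matches the concentration rate of $|W_{\varepsilon,\xi}|_{L^r}^2$, and simultaneously its H\"older conjugate is exactly $2^{*}$, so that Sobolev can absorb $|\psi|_{L^{s}}$ into $\|\psi\|_H$. After this choice everything reduces to routine bookkeeping; the only minor care point, easily dispatched, is the passage from $H^1_g$ to $H^1_{0,g}$ in the Dirichlet case.
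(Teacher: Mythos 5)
Your proof is correct and follows essentially the same route as the paper's: test the defining PDE against $\psi$, discard the nonnegative $q^2u^2\psi^2$ term, apply H\"older with exponents $(4n/(n+2),2^*)$, absorb $|\psi|_{L^{2^*}}$ via Sobolev, split $u=W_{\varepsilon,\xi}+\varphi$, and use the scaling in Remark \ref{remark:Weps} on the $W$-term with either the $g$-Sobolev embedding or the $\varepsilon$-uniform embedding on the $\varphi$-term. As a minor remark, your bookkeeping is cleaner than the paper's displayed chain: the paper's intermediate displays carry a typographical exponent $\varepsilon^{2n/(n+2)}$ in place of the correct $\varepsilon^{(n+2)/2}$ appearing in the lemma statement, whereas your computation $2n/r=(n+2)/2$ with $r=4n/(n+2)$ is the right one.
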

\begin{proof}
We prove the claim for the Neumann boundary condition. For the Dirichlet
boundary condition the proof is completely analogous taking in account
the gradient norm on $H.$

To simplify the notations we set $v=\psi(W_{\varepsilon,\xi}+\varphi)$.
By definition of $\psi$ we have 
\begin{align*}
\|v\|_{H}^{2} & \le  \int_{M}|\nabla_{g}v|^{2}+v^{2}+q^{2}(W_{\varepsilon,\xi}+\varphi)^{2}v^{2}
=q\int(W_{\varepsilon,\xi}+\varphi)^{2}v\\
 & \le  \left(\int_{M}v^{2^{*}}\right)^{\frac{1}{2^{*}}}
 \left(\int_{M}(W_{\varepsilon,\xi}+\varphi)^{\frac{4n}{n+2}}\right)^{\frac{n+2}{2n}}
 \le c\|v\|_{H_{g}^{1}}\left|W_{\varepsilon,\xi}+\varphi\right|_{\frac{4n}{n+2},g}^{2}\\
 & \le c\|v\|_{H_{g}^{1}}\left(\left|W_{\varepsilon,\xi}\right|_{\frac{4n}{n+2},g}^{2}+\left|\varphi\right|_{\frac{4n}{n+2},g}^{2}\right)
\end{align*}
Thus $\|v\|_{H}\le c\left(\left|W_{\varepsilon,\xi}\right|_{\frac{4n}{n+2},g}+\left|\varphi\right|_{\frac{4n}{n+2},g}\right)$.
Taking in account (\ref{eql2-1}) of Remark \ref{remark:Weps} we
have that, for $\varepsilon$ small $\left|W_{\varepsilon,\xi}\right|_{\frac{4n}{n+2},g}^{2}\le C\varepsilon^{\frac{2n}{n+2}}\left|U\right|_{\frac{4n}{n+2},g}^{2}$.
Thus we have 

\begin{equation}
\|v\|_{H_{g}^{1}}\le c_{1}\left(\varepsilon^{\frac{2n}{n+2}}+|\varphi|_{\frac{4n}{n+2},g}^{2}\right)\le c_{1}(\varepsilon^{\frac{2n}{n+2}}+\|\varphi\|_{H_{g}^{1}}^{2})\label{eq:e6bis}
\end{equation}
and 
\begin{equation}
\|v\|_{H_{g}^{1}}\le c_{2}\varepsilon^{\frac{2n}{n+2}}\left(1+|\varphi|_{\frac{4n}{n+2},\varepsilon}^{2}\right)\le c_{2}\varepsilon^{\frac{2n}{n+2}}\left(1+\|\varphi\|_{\varepsilon}^{2}\right).\label{eq:e6}
\end{equation}
that prove (\ref{eq:phiweps-1}) and (\ref{eq:phiweps-2}).\label{lem:e7-D}For
any $\xi\in M$ and $h,k\in H_{g}^{1}$ it holds\end{proof}
\begin{lem}
\label{lem:psiprimo}It holds, for any $h,k\in H$ and for any $\xi\in\partial M$

\[
\|\psi'(W_{\varepsilon,\xi}+k)[h]\|_{H}\le c\|h\|_{H}\left\{ \varepsilon^{2}+\|k\|_{H}\right\} 
\]
for some positive constant $c$when $\varepsilon$ is sufficiently
small.\end{lem}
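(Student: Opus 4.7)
The plan is to mimic the testing argument from the proof of the previous lemma, now applied to the linear equation \eqref{eq:e2} (respectively \eqref{eq:e2-2}) satisfied by $V:=\psi'(W_{\varepsilon,\xi}+k)[h]$, and to extract the sharp $\varepsilon^{2}$ factor by choosing the \emph{critical} Hölder exponents in place of the symmetric ones used before. Setting $u=W_{\varepsilon,\xi}+k$, I would first test the equation for $V$ against $V$ itself, use the coercivity of the left-hand side together with the bound $0\le 1-q\psi(u)\le 1$ coming from \eqref{psipos}, and arrive at
\begin{equation*}
\|V\|_{H}^{2}\le 2q\int_{M}|u||h||V|\,d\mu_{g}\le 2q\int_{M}|W_{\varepsilon,\xi}||h||V|\,d\mu_{g}+2q\int_{M}|k||h||V|\,d\mu_{g}.
\end{equation*}

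For the $W$-term I would apply Hölder with the triple $(n/2,2^{*},2^{*})$, which is admissible because $2/n+(n-2)/n=1$; this puts both $h$ and $V$ at their Sobolev-optimal exponent and reduces matters to estimating $|W_{\varepsilon,\xi}|_{n/2,g}$. The change of variables $y\mapsto\varepsilon z$ in Fermi coordinates, exactly as in the previous lemma via \eqref{eql2-1}, yields $|W_{\varepsilon,\xi}|_{n/2,g}^{n/2}\approx\varepsilon^{n}\int_{\mathbb{R}^{n}_{+}}U^{n/2}$, hence $|W_{\varepsilon,\xi}|_{n/2,g}\le C\varepsilon^{2}$. Combined with the Sobolev embeddings $|h|_{2^{*},g},|V|_{2^{*},g}\le C\|\cdot\|_{H}$ this produces $\int|W_{\varepsilon,\xi}||h||V|\le C\varepsilon^{2}\|h\|_{H}\|V\|_{H}$.

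For the $k$-term I would exploit the fact that $n\le 4$ implies $4\le 2^{*}$, so the triple $(4,4,2)$ is Hölder-admissible and every factor is Sobolev- (or, in the Dirichlet setting, Poincaré plus Sobolev-) controlled, yielding $\int|k||h||V|\le C\|k\|_{H}\|h\|_{H}\|V\|_{H}$. Summing the two bounds and dividing by $\|V\|_{H}$ delivers the claim. The Dirichlet case is identical, with the coercive term $q^{2}u^{2}V$ replacing $(1+q^{2}u^{2})V$ and Poincaré substituting for the $L^{2}$ contribution to the $H^{1}$-norm.

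The hard part — and the one place where the argument departs from the previous lemma — is the choice of Hölder exponents on the $W$-term. The triple $(n/2,2^{*},2^{*})$ is precisely the unique admissible configuration that simultaneously saturates Sobolev on both $h$ and $V$ while extracting the maximal $\varepsilon$-power from the concentrating bump. Repeating the symmetric ansatz of the previous lemma (exponents $(4n/(n+2),4n/(n+2),2^{*})$) would only yield $\varepsilon^{(n+2)/4}$, which is strictly weaker than $\varepsilon^{2}$ for $n=3,4$; recognising that the linearity of the right-hand side in $h$ both allows and forces the asymmetric critical choice is the key step.
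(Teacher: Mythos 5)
Your proof is correct and follows essentially the same route as the paper: test the linearized equation against $V=\psi'(W_{\varepsilon,\xi}+k)[h]$, discard the nonnegative $q^{2}u^{2}V^{2}$ term and use $0\le 1-q\psi\le1$, split into the $W$-term and the $k$-term, and apply H\"older with the triple $(n/2,2^{*},2^{*})$ on the $W$-term to extract $|W_{\varepsilon,\xi}|_{n/2,g}\le C\varepsilon^{2}$ — this is exactly the paper's $I_{1}$ estimate (the paper's ``$|W|_{2/n,g}$'' is a typo for $|W|_{n/2,g}$). The only cosmetic difference is the H\"older triple on the $k$-term, $(4,4,2)$ in your version versus $(3,3,3)$ in the paper; both are subcritical for $n=3,4$ and give the same bound.
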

\begin{proof}
Again, we prove the claim for the Neumann boundary condition being
the other case completely analogous. By (\ref{eq:e2}) and since $0<\psi<1/q$,
\begin{eqnarray*}
\|\psi'(W_{\varepsilon,\xi}+k)[h]\|_{H_{g}^{1}}^{2} & = & 2q\int_{M}(W_{\varepsilon,\xi}+k)(1-q\psi(W_{\varepsilon,\xi}+k))h\psi'(W_{\varepsilon,\xi}+k)[h]\\
 &  & -q^{2}\int_{M}(W_{\varepsilon,\xi}+k)^{2}(\psi'(W_{\varepsilon,\xi}+k)[h])^{2}\\
 & \le & \int_{M}W_{\varepsilon,\xi}|h|\left|\psi'(W_{\varepsilon,\xi}+k)[h]\right|+\int_{M}|k||h|\left|\psi'(W_{\varepsilon,\xi}+k)[h]\right|\\
 & := & I_{1}+I_{2}
\end{eqnarray*}
We estimate the two terms $I_{1}$ and $I_{2}$ separately. We have
\[
I_{1}\le\left|\psi'(W_{\varepsilon,\xi}+k)[h]\right|_{2^{*},g}\left|h\right|_{2^{*},g}\left|W_{\varepsilon,\xi}\right|_{\frac{2}{n},g}\le\varepsilon^{2}\|\psi'\|_{H_{g}^{1}}\|h\|_{H_{g}^{1}}\left|W_{\varepsilon,\xi}\right|_{\frac{n}{2},\varepsilon}
\]
\[
I_{2}\le\|k\|_{L_{g}^{3}}\|h\|_{L_{g}^{3}}\|\psi'(W_{\varepsilon,\xi}+k)[h]\|_{L_{g}^{3}}\le\|k\|_{H_{g}^{1}}\|h\|_{H_{g}^{1}}\|\psi'\|_{H_{g}^{1}}
\]
and, in light of Remark \ref{remark:Weps}, we obtain the claim.
\end{proof}

\subsection{The Lyapunov Schmidt reduction}

We want to split the space $H_{\varepsilon}$ in a finite dimensional
space generated by the solution of (\ref{eq:linear}) and its orthogonal
complement. Fixed $\xi\in\partial M$ and $R>0$, we consider on the
manifold the functions 
\begin{equation}
Z_{\varepsilon,\xi}^{i}=\left\{ \begin{array}{ccc}
\varphi_{\varepsilon}^{i}\left(\left(\psi_{\xi}^{\partial}\right)^{-1}(x)\right)\chi_{R}\left(\left(\psi_{\xi}^{\partial}\right)^{-1}(x)\right) &  & x\in I_{\xi}(R):=I_{\xi}(R,R);\\
0 &  & \text{elsewhere}.
\end{array}\right.\label{eq:Zi}
\end{equation}
where ${\displaystyle \varphi_{\varepsilon}^{i}(z)=\varphi^{i}\left(\frac{z}{\varepsilon}\right)}$
and $\chi_{R}:B^{n-1}(0,R)\times[0,R)\rightarrow\mathbb{R}^{+}$ is
a smooth cut off function such that $\chi_{R}\equiv1$ on $B^{n-1}(0,R/2)\times[0,R/2)$
and $|\nabla\chi|\le2$.

In the following, for sake of simplicity, we denote
\begin{equation}
D^{+}(R)=B^{n-1}(0,R)\times[0,R)\subset\mathbb{R}_{+}^{n}\label{eq:D+}
\end{equation}

Let 
\[
K_{\varepsilon,\xi}:=\mbox{Span}\left\{ Z_{\varepsilon,\xi}^{1},\cdots,Z_{\varepsilon,\xi}^{n-1}\right\} .
\]
 We can split $H_{\varepsilon}$ in the sum of the $\left(n-1\right)$-dimensional
space and its orthogonal complement with respect of $\left\langle \cdot,\cdot\right\rangle _{\varepsilon}$,
i.e.
\[
K_{\varepsilon,\xi}^{\bot}:=\left\{ u\in H_{\varepsilon}\ ,\ \left\langle u,Z_{\varepsilon,\xi}^{i}\right\rangle _{\varepsilon}=0.\right\} .
\]
We solve problem (\ref{eq:Pnn}) by a Lyapunov Schmidt reduction:
we look for a function of the form $W_{\varepsilon,\xi}+\phi$ with
$\phi\in K_{\varepsilon,\xi}^{\bot}$ such that 
\begin{eqnarray}
\Pi_{\varepsilon,\xi}^{\bot}\left\{ W_{\varepsilon,\xi}+\phi-i_{\varepsilon}^{*}\left[f\left(W_{\varepsilon,\xi}+\phi\right)+\omega^{2}g\left(W_{\varepsilon,\xi}+\phi\right)\right]\right\}  & = & 0\label{eq:red1}\\
\Pi_{\varepsilon,\xi}\left\{ W_{\varepsilon,\xi}+\phi-i_{\varepsilon}^{*}\left[f\left(W_{\varepsilon,\xi}+\phi\right)+\omega^{2}g\left(W_{\varepsilon,\xi}+\phi\right)\right]\right\}  & = & 0\label{eq:red2}
\end{eqnarray}
where $\Pi_{\varepsilon,\xi}:H_{\varepsilon}\rightarrow K_{\varepsilon,\xi}$
and $\Pi_{\varepsilon,\xi}^{\bot}:H_{\varepsilon}\rightarrow K_{\varepsilon,\xi}^{\bot}$
are, respectively, the projection on $K_{\varepsilon,\xi}$ and $K_{\varepsilon,\xi}^{\bot}$.
We see that $W_{\varepsilon,\xi}+\phi$ is a solution of (\ref{eq:Pnn})
if and only if $W_{\varepsilon,\xi}+\phi$ solves (\ref{eq:red1}-\ref{eq:red2}).

\section{\label{sec:reduction}Reduction to finite dimensional space}

In this section we find a solution for equation (\ref{eq:red1}).
In particular, we prove that for all $\varepsilon>0$ and for all
$\xi\in\partial M$ there exists $\phi_{\varepsilon,\xi}\in K_{\varepsilon,\xi}^{\bot}$
solving (\ref{eq:red1}). The main part of the reduction is performed
in \cite{GMbordo} and in \cite{MP09}. Here we explicitly estimate
only the term appearing in this specific contest.

We can rewrite equation (\ref{eq:red1}) as 

\[
L_{\varepsilon,\xi}(\phi)=N_{\varepsilon,\xi}(\phi)+R_{\varepsilon,\xi}+S_{\varepsilon,\xi}(\phi)
\]
were $L_{\varepsilon,\xi}$ is the linear operator
\begin{eqnarray*}
L_{\varepsilon,\xi} & : & K_{\varepsilon,\xi}^{\bot}\rightarrow K_{\varepsilon,\xi}^{\bot}\\
L_{\varepsilon,\xi}(\phi) & := & \Pi_{\varepsilon,\xi}^{\bot}\left\{ \phi-i_{\varepsilon}^{*}\left[f'(W_{\varepsilon,\xi})\phi\right]\right\} ,
\end{eqnarray*}
$N_{\varepsilon,\xi}(\phi)$ is the nonlinear term 
\[
N_{\varepsilon,\xi}:=\Pi_{\varepsilon,\xi}^{\bot}\left\{ i_{\varepsilon}^{*}\left[f(W_{\varepsilon,\xi}+\phi)-f(W_{\varepsilon,\xi})-f'(W_{\varepsilon,\xi})\phi\right]\right\} 
\]
$R_{\varepsilon,\xi}$ is a remainder term
\[
R_{\varepsilon,\xi}:=\Pi_{\varepsilon,\xi}^{\bot}\left\{ i_{\varepsilon}^{*}\left[f(W_{\varepsilon,\xi})\right]-W_{\varepsilon,\xi}\right\} 
\]
and $S_{\varepsilon,\xi}$ is the coupling term 
\[
S_{\varepsilon,\xi}=\Pi_{\varepsilon,\xi}^{\bot}\left\{ i_{\varepsilon}^{*}\left[\omega^{2}g\left(W_{\varepsilon,\xi}+\phi\right)\right]\right\} .
\]

\begin{prop}premise
\label{prop:phieps}There exists $\varepsilon_{0}>0$ and $C>0$ such
that for any $\xi\in\partial M$ and for all $\varepsilon\in(0,\varepsilon_{0})$
there exists a unique $\phi_{\varepsilon,\xi}=\phi(\varepsilon,\xi)\in K_{\varepsilon,\xi}^{\bot}$
which solves (\ref{eq:red1}). Moreover
\[
\|\phi_{\varepsilon,\xi}\|_{\varepsilon}<C\varepsilon^{2}.
\]
Finally, $\xi\mapsto\phi_{\varepsilon,\xi}$ is a $C^{1}$ map.
\end{prop}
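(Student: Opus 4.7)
The plan is to invert the linearized operator $L_{\varepsilon,\xi}$ on $K_{\varepsilon,\xi}^{\bot}$ and recast equation (\ref{eq:red1}) as the fixed point problem
\[
\phi \;=\; L_{\varepsilon,\xi}^{-1}\bigl[N_{\varepsilon,\xi}(\phi)+R_{\varepsilon,\xi}+S_{\varepsilon,\xi}(\phi)\bigr]\,=:\,T_{\varepsilon,\xi}(\phi),
\]
to be solved by the Banach contraction principle in the ball $B_{\varepsilon}=\{\phi\in K_{\varepsilon,\xi}^{\bot}\colon \|\phi\|_{\varepsilon}\le C\varepsilon^{2}\}$. The first ingredient is the uniform invertibility estimate $\|L_{\varepsilon,\xi}(\phi)\|_{\varepsilon}\ge c\|\phi\|_{\varepsilon}$ with $c>0$ independent of $\varepsilon\in(0,\varepsilon_{0})$ and $\xi\in\partial M$. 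This rests on the nondegeneracy of $U$ as a solution of (\ref{eq:P-Rn})---whose linearized kernel (\ref{eq:linear}) is spanned exactly by the functions $\varphi^{i}$ modeled on $\partial M$ by $Z_{\varepsilon,\xi}^{i}$---together with the usual contradiction argument (assume $\|L_{\varepsilon_{n},\xi_{n}}\phi_{n}\|=o(\|\phi_{n}\|)$, blow up around $\xi_{n}$ via Fermi coordinates and pass to the limit in $\mathbb{R}_{+}^{n}$). This step is entirely analogous to \cite{MP09,GMbordo} and may be imported.

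Next, I would estimate each piece on the right-hand side. For the remainder, using the Fermi-coordinate expansions (\ref{eq:g1})--(\ref{eq:g3}), the fact that $U$ solves (\ref{eq:P-Rn}) kills the $O(1)$ term, and a careful computation shows $\|R_{\varepsilon,\xi}\|_{\varepsilon}\le C\varepsilon^{2}$ (the $O(\varepsilon)$ contribution, linear in $y_{n}$, is even, whereas $\varphi^{i}=\partial_{i}U$ used as a test element against anything orthogonal to $K_{\varepsilon,\xi}$ produces at worst $\varepsilon^{2}$; again this is standard for Neumann concentration problems and is carried out in \cite{GMbordo}). For the nonlinear term, Taylor expansion of $f$ combined with Remark \ref{rem:ieps} and the embedding $H_{\varepsilon}\hookrightarrow L_{\varepsilon,p}$ gives
\[
\|N_{\varepsilon,\xi}(\phi_{1})-N_{\varepsilon,\xi}(\phi_{2})\|_{\varepsilon}\le C\bigl(\|\phi_{1}\|_{\varepsilon}+\|\phi_{2}\|_{\varepsilon}\bigr)^{\sigma}\|\phi_{1}-\phi_{2}\|_{\varepsilon}
\]
for some $\sigma>0$ (quadratic when $p\ge 3$, power $p-2$ otherwise), which is the superlinear character needed for contraction.

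The genuinely new ingredient, and the one I expect to be the main obstacle, is the coupling term $S_{\varepsilon,\xi}$. Writing $g(u)=(q^{2}\psi^{2}(u)-2q\psi(u))u$ and invoking the previous lemma together with Lemma~\ref{lem:psiprimo}, the pointwise bound $0\le\psi\le 1/q$ and Remark~\ref{rem:ieps} yield
\[
\|S_{\varepsilon,\xi}(\phi)\|_{\varepsilon}\le C\,\|\psi(W_{\varepsilon,\xi}+\phi)\|_{H}\,\|W_{\varepsilon,\xi}+\phi\|_{\varepsilon}\le C\bigl(\varepsilon^{(n+2)/2}+\|\phi\|_{H}^{2}\bigr),
\]
while the difference $S_{\varepsilon,\xi}(\phi_{1})-S_{\varepsilon,\xi}(\phi_{2})$ is controlled via Lemma~\ref{lem:psiprimo} and the analogous estimate for $\psi''$ by $C(\varepsilon^{2}+\|\phi_{1}\|_{\varepsilon}+\|\phi_{2}\|_{\varepsilon})\|\phi_{1}-\phi_{2}\|_{\varepsilon}$. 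Since $n=3,4$ gives $\varepsilon^{(n+2)/2}\ge\varepsilon^{3}\ll\varepsilon^{2}$, the coupling is a lower-order perturbation and does not interfere with the choice of $B_{\varepsilon}$.

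Combining these four estimates, for $\varepsilon$ small enough $T_{\varepsilon,\xi}$ maps $B_{\varepsilon}$ into itself and is a contraction, so the fixed point $\phi_{\varepsilon,\xi}$ exists and is unique with $\|\phi_{\varepsilon,\xi}\|_{\varepsilon}\le C\varepsilon^{2}$. Finally, the $C^{1}$ dependence on $\xi$ follows from the implicit function theorem applied to the $C^{1}$ map
\[
F(\varepsilon,\xi,\phi)\;=\;\Pi_{\varepsilon,\xi}^{\bot}\bigl\{W_{\varepsilon,\xi}+\phi-i_{\varepsilon}^{*}[f(W_{\varepsilon,\xi}+\phi)+\omega^{2}g(W_{\varepsilon,\xi}+\phi)]\bigr\},
\]
whose partial derivative in $\phi$ at $\phi_{\varepsilon,\xi}$ is a compact perturbation of $L_{\varepsilon,\xi}$, hence invertible by the estimates above, and whose smooth $\xi$--dependence comes from the Fermi chart $\Psi_{\xi}^{\partial}$ entering $W_{\varepsilon,\xi}$ and $Z_{\varepsilon,\xi}^{i}$.
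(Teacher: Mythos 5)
Your proposal follows the paper's approach exactly: the same decomposition into $L_{\varepsilon,\xi}$, $N_{\varepsilon,\xi}$, $R_{\varepsilon,\xi}$, $S_{\varepsilon,\xi}$, the same import of the invertibility of $L_{\varepsilon,\xi}$ and the bounds on $N$, $R$ from \cite{MP09,GMbordo}, and a contraction on a ball of radius $C\varepsilon^{2}$. The one place where you are imprecise is the scaling in the estimate of $S_{\varepsilon,\xi}$. The bound
\[
\|S_{\varepsilon,\xi}(\phi)\|_{\varepsilon}\le C\,\|\psi(W_{\varepsilon,\xi}+\phi)\|_{H}\,\|W_{\varepsilon,\xi}+\phi\|_{\varepsilon}
\]
mixes the unscaled norm $\|\cdot\|_{H}$ with the $\varepsilon$-weighted norm $\|\cdot\|_{\varepsilon}$ and drops the conversion factor: carrying out H\"older on $|g(W_{\varepsilon,\xi}+\phi)|_{\varepsilon,p'}$, the $\varepsilon^{-n/p'}$ weight is only partially absorbed by the scaled $L^{p'(2^*/p')'}_{\varepsilon}$ norm of $W_{\varepsilon,\xi}+\phi$, and a residual factor $\varepsilon^{-n/2^*}$ multiplies $\|\psi\|_{H}$. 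Thus the correct bound is $\|S_{\varepsilon,\xi}(\phi)\|_{\varepsilon}\le c\varepsilon^{-n/2^*}\varepsilon^{(n+2)/2}=c\varepsilon^{2}$, \emph{not} $c\varepsilon^{(n+2)/2}$; likewise the Lipschitz constant for $S$ is not $O(\varepsilon^{2})$ but rather $O(\varepsilon^{5/2-3/p'})$ (for $n=3$) or $O(\varepsilon^{3-4/p'})$ (for $n=4$), which is $o(1)$ but can be much larger than $\varepsilon^{2}$ when $p$ is close to $2^{*}$. Since both the corrected bound $c\varepsilon^{2}$ and the corrected Lipschitz constant $o(1)$ still make $T_{\varepsilon,\xi}$ a contraction on the ball of radius $C\varepsilon^{2}$, your conclusion and the overall structure of the argument stand; only the claimed rates for $S$ need to be weakened to match what the H\"older/scaling actually gives. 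Your sketch of the remainder estimate (invoking parity of the $O(\varepsilon)$ term) is not the mechanism used here---the bound $\|R_{\varepsilon,\xi}\|_{\varepsilon}\le c\varepsilon^{1+n/p'}$ in \cite{GMbordo} is a direct size estimate with no cancellation---but since you import this from the reference the slip is harmless.
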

To prove this result, we premise some technical lemma.
\begin{rem}
\label{lem:Linv}We summarize here the results on $L_{\varepsilon,\xi},N_{\varepsilon,\xi}$
and $R_{\varepsilon,\xi}$ contained in \cite{GMbordo}.

There exist $\varepsilon_{0}$ and $c>0$ such that, for any $\xi\in\partial M$
and $\varepsilon\in(0,\varepsilon_{0})$ 
\[
\|L_{\varepsilon,\xi}\|_{\varepsilon}\geq c\|\phi\|_{\varepsilon}\text{ for any }\phi\in K_{\varepsilon,\xi}^{\bot}.
\]
Also it holds 
\[
\|R_{\varepsilon,\xi}\|_{\varepsilon}\le c\varepsilon^{1+\frac{n}{p'}}
\]
and 
\[
\|N_{\varepsilon,\xi}(\phi)\|_{\varepsilon}\le c\left(\|\phi\|_{\varepsilon}^{2}+\|\phi\|_{\varepsilon}^{p-1}\right)
\]
We further remark that $\frac{n}{p'}>1$ since $2\le p<2^{*}$ 
\end{rem}
We have now to estimate the coupling term $S_{\varepsilon,\xi}$. 
\begin{lem}
If $\|\phi\|_{\varepsilon},\|\phi_{1}\|_{\varepsilon},\|\phi_{2}\|_{\varepsilon}=O(\varepsilon^{2})$
it holds 
\begin{eqnarray}
\|S_{\varepsilon,\xi}(\phi)\|_{\varepsilon} & \le & c\varepsilon^{2}\label{eq:Seps1}\\
\|S_{\varepsilon,\xi}(\phi_{1})-S_{\varepsilon,\xi}(\phi_{2})\|_{\varepsilon} & \le & l_{\varepsilon}\|\phi_{1}-\phi_{2}\|_{\varepsilon}\label{eq:Seps2}
\end{eqnarray}
where $l_{\varepsilon}\rightarrow0$ as $\varepsilon\rightarrow0$. \end{lem}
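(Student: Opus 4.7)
The plan is to estimate both quantities by reducing to $L^{p'}_\varepsilon$-bounds on $g(W_{\varepsilon,\xi}+\phi)$ (respectively on its increments), and then exploit the smallness of $\psi$ supplied by \eqref{eq:phiweps-2} and of $\psi'$ supplied by Lemma \ref{lem:psiprimo}. Since $\Pi_{\varepsilon,\xi}^{\bot}$ is a contraction and $i_\varepsilon^*$ maps $L_\varepsilon^{p'}$ boundedly into $H_\varepsilon$ by Remark \ref{rem:ieps}, it suffices to prove
\[
|g(W_{\varepsilon,\xi}+\phi)|_{p',\varepsilon}\le c\varepsilon^2 \quad\text{and}\quad |g(W_{\varepsilon,\xi}+\phi_1)-g(W_{\varepsilon,\xi}+\phi_2)|_{p',\varepsilon}\le l_\varepsilon\|\phi_1-\phi_2\|_\varepsilon.
\]

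For \eqref{eq:Seps1} I use the pointwise bound $|g(u)|\le 2q\,\psi(u)|u|$, which follows at once from $0\le q\psi(u)\le 1$. The rescaled Hölder inequality with conjugate exponents $2^*$ and $r$, where $1/p'=1/2^*+1/r$ (the condition $r<2^*$ is guaranteed by $p>2$), yields
\[
|g(u)|_{p',\varepsilon}\le c\,|\psi(u)|_{2^*,\varepsilon}\,|u|_{r,\varepsilon}.
\]
For $u=W_{\varepsilon,\xi}+\phi$, the factor $|u|_{r,\varepsilon}$ is uniformly bounded, by \eqref{eql2-1} for $W_{\varepsilon,\xi}$ and by the embedding $H_\varepsilon\hookrightarrow L_\varepsilon^{r}$ for $\phi$. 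For the other factor, Sobolev together with Remark \ref{rem:normaeps} converts the $H$-bound \eqref{eq:phiweps-2} into
\[
|\psi(W_{\varepsilon,\xi}+\phi)|_{2^*,\varepsilon}\le c\,\varepsilon^{2}\bigl(1+\|\phi\|_\varepsilon^{2}\bigr),
\]
so the hypothesis $\|\phi\|_\varepsilon=O(\varepsilon^2)$ gives \eqref{eq:Seps1}.

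For the Lipschitz bound \eqref{eq:Seps2} I write, for $u_t:=W_{\varepsilon,\xi}+\phi_2+t(\phi_1-\phi_2)$,
\[
g(W_{\varepsilon,\xi}+\phi_1)-g(W_{\varepsilon,\xi}+\phi_2)=\int_0^1 g'(u_t)[\phi_1-\phi_2]\,dt,
\]
with
\[
g'(u)[h]=2q\bigl(q\psi(u)-1\bigr)\,u\,\psi'(u)[h]+\bigl(q^2\psi(u)^2-2q\psi(u)\bigr)h.
\]
The second summand is handled exactly as in the first part, since $|q^2\psi^2-2q\psi|\le 2q\psi$; it contributes $O(\varepsilon^2)\|\phi_1-\phi_2\|_\varepsilon$. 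For the first summand, Hölder gives $|u_t\,\psi'(u_t)[h]|_{p',\varepsilon}\le c\,|u_t|_{r,\varepsilon}\,|\psi'(u_t)[h]|_{2^*,\varepsilon}$. Applying Sobolev and Lemma \ref{lem:psiprimo} one obtains $\|\psi'(u_t)[h]\|_H\le c\|h\|_H(\varepsilon^2+\|u_t-W_{\varepsilon,\xi}\|_H)$, and converting both factors on the right-hand side from the $H^1_g$-norm to the rescaled $\varepsilon$-norm via Remark \ref{rem:normaeps} (using $\|\phi_i\|_\varepsilon=O(\varepsilon^2)$) yields $|\psi'(u_t)[h]|_{2^*,\varepsilon}\le c\,\varepsilon^{2}\|h\|_\varepsilon$. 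Summing, \eqref{eq:Seps2} holds with $l_\varepsilon=O(\varepsilon^{2})$.

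The main technical bookkeeping is the careful tracking of $\varepsilon$-powers through the translation between the $H^1_g$-norm, in which Lemma \ref{lem:psiprimo} and \eqref{eq:phiweps-2} are stated, and the rescaled $\varepsilon$-norm, via Remark \ref{rem:normaeps} and the rescaled Sobolev embedding $H_\varepsilon\hookrightarrow L_\varepsilon^s$ for $s\in[2,2^*)$. The Neumann and Dirichlet cases run in parallel since only the Sobolev embedding of $\psi$, valid on both $H^1_g$ and $H^1_{0,g}$, enters the argument.
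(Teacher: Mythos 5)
Your proof is correct and follows essentially the same route as the paper: reduce via $i_\varepsilon^*$ to an $L^{p'}_\varepsilon$ bound on $g$, apply Hölder with one factor in $L^{2^*}$ (so that \eqref{eq:phiweps-2} and Lemma \ref{lem:psiprimo} control $\psi$ and $\psi'$ through the Sobolev embedding), and keep track of the $\varepsilon$-powers. The only cosmetic difference is that for \eqref{eq:Seps2} you apply the mean value theorem to $g$ itself and use the chain rule for $g'$, whereas the paper splits the increment of $\psi(u)u$ by a discrete product rule and then applies the mean value theorem to $\psi$ alone; this also lets you obtain the sharper rate $l_\varepsilon=O(\varepsilon^2)$ rather than the (still sufficient) $O(\varepsilon^{5/2-3/p'})$ ($n=3$) of the paper.
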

\begin{proof}
We have, by the properties of the map $i_{\varepsilon}^{*}$, that
\begin{eqnarray*}
\|S_{\varepsilon,\xi}(\phi)\|_{\varepsilon} & \le & c\left|\psi^{2}(W_{\varepsilon,\xi}+\phi)(W_{\varepsilon,\xi}+\phi)\right|_{\varepsilon,p'}+c\left|\psi(W_{\varepsilon,\xi}+\phi)(W_{\varepsilon,\xi}+\phi)\right|_{\varepsilon,p'}\\
 & \le & c\left|\psi(W_{\varepsilon,\xi}+\phi)(W_{\varepsilon,\xi}+\phi)\right|_{\varepsilon,p'}\\
 & \le & \frac{c}{\varepsilon^{\frac{n}{p'}}}\left(\int\psi(W_{\varepsilon,\xi}+\phi)^{2*}\right)^{\frac{1}{2^{*}}}\left(\int|W_{\varepsilon,\xi}+\phi|^{p'\left(\frac{2^{*}}{p'}\right)^{'}}\right)^{\frac{1}{p^{'}\left(\frac{2^{*}}{p'}\right)^{'}}}\\
 & \le c & \varepsilon^{-\frac{n}{p'}+\frac{n}{p^{'}\left(\frac{2^{*}}{p'}\right)^{'}}}\|\psi(W_{\varepsilon,\xi}+\phi)\|_{H}\left|W_{\varepsilon,\xi}+\phi\right|_{\varepsilon,p'\left(\frac{2^{*}}{p'}\right)^{'}}\\
 &\le& c\varepsilon^{-\frac{n}{2^{*}}}\|\psi(W_{\varepsilon,\xi}+\phi)\|_{H}
 \le c\varepsilon^{-\frac{n}{2^{*}}}\varepsilon^{\frac{n+2}{2}}=c\varepsilon^{2}
\end{eqnarray*}
by (\ref{eq:phiweps-2}) and taking in account that $\|\phi\|_{\varepsilon}=o(1)$
by Remark \ref{rem:normaeps}, and the first step is proved. 

For the second claim, we have, since $0\le\psi\le1/q$
\begin{align*}
\|S_{\varepsilon,\xi}(\phi_{1})-S_{\varepsilon,\xi}(\phi_{2})\|_{\varepsilon}\le & c\left|\psi^{2}(W_{\varepsilon,\xi}+\phi_{1})(W_{\varepsilon,\xi}+\phi_{1})-\psi^{2}(W_{\varepsilon,\xi}+\phi_{2})(W_{\varepsilon,\xi}+\phi_{2})\right|_{\varepsilon,p'}\\
 & +c\left|\psi(W_{\varepsilon,\xi}+\phi_{1})(W_{\varepsilon,\xi}+\phi_{1})-\psi(W_{\varepsilon,\xi}+\phi_{2})(W_{\varepsilon,\xi}+\phi_{2})\right|_{\varepsilon,p'}\\
\le & c\left|\psi(W_{\varepsilon,\xi}+\phi_{1})(W_{\varepsilon,\xi}+\phi_{1})-\psi(W_{\varepsilon,\xi}+\phi_{2})(W_{\varepsilon,\xi}+\phi_{2})\right|_{\varepsilon,p'}\\
\le & c\left|\left[\psi(W_{\varepsilon,\xi}+\phi_{1})-\psi(W_{\varepsilon,\xi}+\phi_{2})\right](W_{\varepsilon,\xi}+\phi_{1})\right|_{\varepsilon,p'}\\
 & +\left|\psi(W_{\varepsilon,\xi}+\phi_{2})\left[\phi_{1}-\phi_{2}\right]\right|_{\varepsilon,p'}\\
\le & c\left|\left[\psi'(W_{\varepsilon,\xi}+(1-\theta)\phi_{1}+\theta\phi_{2})[\phi_{1}-\phi_{2}]\right](W_{\varepsilon,\xi}+\phi_{1})\right|_{\varepsilon,p'}\\
 & +\left|\psi(W_{\varepsilon,\xi}+\phi_{2})\left[\phi_{1}-\phi_{2}\right]\right|_{\varepsilon,p'}:=D_{1}+D_{2}
\end{align*}
for some $\theta\in(0,1)$. Arguing as in the first part of the proof
we get, in light of (\ref{eq:phiweps-2}), that 
\[
D_{2}\le c\varepsilon^{-\frac{n}{p^{*}}}\|\psi(W_{\varepsilon,\xi}+\phi)\|_{H}\left|\phi_{1}-\phi_{2}\right|_{\varepsilon,p'\left(\frac{2^{*}}{p'}\right)^{'}}\le c\varepsilon^{-\frac{n}{2^{*}}}\varepsilon^{\frac{n+2}{2}}\|\phi_{1}-\phi_{2}\|_{\varepsilon}
\]
 and, using Lemma \ref{lem:psiprimo}, that 
\begin{align*}
D_{1} & \le c\varepsilon^{-\frac{n}{p'}}\left\Vert \left[\psi'(W_{\varepsilon,\xi}+(1-\theta)\phi_{1}+\theta\phi_{2})[\phi_{1}-\phi_{2}]\right]\right\Vert _{H}\left|W_{\varepsilon,\xi}+\phi_{1}\right|_{\varepsilon,p'\left(\frac{2^{*}}{p'}\right)^{'}}\\
 & \le c\varepsilon^{-\frac{n}{p'}}\left\{ \varepsilon^{2}+(1-\theta)\|\phi_{1}\|_{H}+\theta\|\phi_{2}\|_{H}\right\} \|\phi_{1}-\phi_{2}\|_{H}.
\end{align*}
If $n=3,$ by (\ref{eq:norma1}) and since $\|\phi_{1}\|_{\varepsilon},\|\phi_{2}\|_{\varepsilon}=o(\varepsilon)$
by hypothesis we have
\begin{align*}
D_{1} & \le c\varepsilon^{-\frac{3}{p'}}\left\{ \varepsilon^{2}+\varepsilon^{1/2}(1-\theta)\|\phi_{1}\|_{\varepsilon}+\varepsilon^{1/2}\theta\|\phi_{2}\|_{\varepsilon}\right\} \varepsilon^{1/2}\|\phi_{1}-\phi_{2}\|_{\varepsilon}\\
 & \le c\varepsilon^{\frac{5}{2}-\frac{3}{p'}}\|\phi_{1}-\phi_{2}\|_{\varepsilon}
\end{align*}
and the claim is proved since $\frac{5}{2}-\frac{3}{p'}>0$ if $p'>\frac{6}{5}$
that is true since $p<6$. For $n=4,$ analogously we have, by (\ref{eq:norma2})
\begin{align*}
D_{1} & \le c\varepsilon^{-\frac{4}{p'}}\left\{ \varepsilon^{2}+\varepsilon(1-\theta)\|\phi_{1}\|_{\varepsilon}+\varepsilon\theta\|\phi_{2}\|_{\varepsilon}\right\} \varepsilon\|\phi_{1}-\phi_{2}\|_{\varepsilon}\\
 & \le c\varepsilon^{3-\frac{4}{p'}}\|\phi_{1}-\phi_{2}\|_{\varepsilon}
\end{align*}
and $3-\frac{4}{p'}>0$ iff $p'>\frac{4}{3}$ that is $p<4$.
\end{proof}

We can now prove the main result of this section
\begin{proof}[Proof of Proposition \ref{prop:phieps}]
The proof is similar to Proposition 3.5 of \cite{MP09}, which we
refer to for all details. We want to solve (\ref{eq:red1}) by a fixed
point argument. We define the operator
\begin{eqnarray*}
T_{\varepsilon,\xi} & : & K_{\varepsilon,\xi}^{\bot}\rightarrow K_{\varepsilon,\xi}^{\bot}\\
T_{\varepsilon,\xi}(\phi) & = & L_{\varepsilon,\xi}^{-1}\left(N_{\varepsilon,\xi}(\phi)+R_{\varepsilon,\xi}S_{\varepsilon,\xi}(\phi)\right)
\end{eqnarray*}
By Remark \ref{lem:Linv} $T_{\varepsilon,\xi}$ is well defined and
it holds
\begin{eqnarray*}
\|T_{\varepsilon,\xi}(\phi)\|_{\varepsilon} & \le & c\left(\|N_{\varepsilon,\xi}(\phi)\|_{\varepsilon}+\|R_{\varepsilon,\xi}\|_{\varepsilon}+\|S_{\varepsilon,\xi}(\phi)\|_{\varepsilon}\right)\\
\|T_{\varepsilon,\xi}(\phi_{1})-T_{\varepsilon,\xi}(\phi_{2})\|_{\varepsilon} & \le & c\left(\|N_{\varepsilon,\xi}(\phi_{1})-N_{\varepsilon,\xi}(\phi_{2})\|_{\varepsilon}+\|S_{\varepsilon,\xi}(\phi_{1})-S_{\varepsilon,\xi}(\phi_{2})\|_{\varepsilon}\right)
\end{eqnarray*}
for some suitable constant $c>0$. By the mean value theorem (and
by the properties of $i^{*}$) we get
\[
\|N_{\varepsilon,\xi}(\phi_{1})-N_{\varepsilon,\xi}(\phi_{2})\|_{\varepsilon}\le c\left|f'(W_{\varepsilon,\xi}+\phi_{2}+t(\phi_{1}-\phi_{2}))-f'(W_{\varepsilon,\xi})\right|_{\frac{p}{p-2},\varepsilon}\|\phi_{1}-\phi_{2}\|_{\varepsilon}.
\]
By \cite{MP09}, Remark 3.4 we have that $\left|f'(W_{\varepsilon,\xi}+\phi_{2}+t(\phi_{1}-\phi_{2}))-f'(W_{\varepsilon,\xi})\right|_{\frac{p}{p-2},\varepsilon}<<1$
provided $\|\phi_{1}\|_{\varepsilon}$ and $\|\phi_{2}\|_{\varepsilon}$
small enough. This, combined with (\ref{eq:Seps2}) proves that there
exists $0<L<1$ such that $\|T_{\varepsilon,\xi}(\phi_{1})-T_{\varepsilon,\xi}(\phi_{2})\|_{\varepsilon}\le L\|\phi_{1}-\phi_{2}\|_{\varepsilon}$. 

We recall that by Lemma \ref{lem:Linv} we have 
\begin{eqnarray*}
\|N_{\varepsilon,\xi}(\phi)\|_{\varepsilon} & \le & c\left(\|\phi\|_{\varepsilon}^{2}+\|\phi\|_{\varepsilon}^{p-1}\right)\\
\|R_{\varepsilon,\xi}\|_{\varepsilon} & \le & \varepsilon^{1+\frac{n}{p'}}=o(\varepsilon^{2})
\end{eqnarray*}
This, combined with (\ref{eq:Seps1}) gives us 
\begin{align*}
\|T_{\varepsilon,\xi}(\phi)\|_{\varepsilon}&\le c\left(\|N_{\varepsilon,\xi}(\phi)\|_{\varepsilon}+\|R_{\varepsilon,\xi}\|_{\varepsilon}+\|S_{\varepsilon,\xi}(\phi)\|_{\varepsilon}\right)\\
&\le c\left(\|\phi\|_{\varepsilon}^{2}+\|\phi\|_{\varepsilon}^{p-1}+\varepsilon^{1+\frac{n}{p'}}+c\varepsilon^{2}\right)
\end{align*}
 So, there exists a positive constant $C$ such that $T_{\varepsilon,\xi}$
maps a ball of center $0$ and radius $C\varepsilon^{2}$ in $K_{\varepsilon,\xi}^{\bot}$
into itself and it is a contraction. So there exists a fixed point
$\phi_{\varepsilon,\xi}$ with norm $\|\phi_{\varepsilon,\xi}\|_{\varepsilon}\le C\varepsilon^{2}$. 

The continuity of $ $$\phi_{\varepsilon,\xi}$ with respect to $\xi$
is standard.
\end{proof}

\section{\label{sec:functional}The reduced functional}

In this section we define the reduced functional in a finite dimensional
space and we solve equation (\ref{eq:red2}). This leads us to the
prove of main theorem.

We have introduced $I_{\varepsilon}(u)$ in the introduction. We now
define the reduced functional 
\begin{eqnarray*}
\tilde{I}_{\varepsilon} & : & \partial M\rightarrow\mathbb{R}\\
\tilde{I}_{\varepsilon}(\xi) & = & I_{\varepsilon}(W_{\varepsilon,\xi}+\phi_{\varepsilon,\xi})
\end{eqnarray*}
where $\phi_{\varepsilon,\xi}$ is uniquely determined by Proposition
\ref{prop:phieps}.
\begin{lem}
\label{lem:sol2}Let $\xi_{0}$ a critical point of $\tilde{I}_{\varepsilon}$,
that is, if $\xi=\xi(y)=\exp_{\xi_{0}}^{\partial}(y)$, $y\in B^{n-1}(0,r)$,
then
\[
\left(\frac{\partial}{\partial y_{h}}\tilde{I}_{\varepsilon}(\xi(y))\right)_{|_{y=0}}=0,\ \ h=1,\dots,n-1.
\]

Thus the function $\phi_{\varepsilon,\xi}+W_{\varepsilon,\xi}$ solves
equation (\ref{eq:red2}). \end{lem}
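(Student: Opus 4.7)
The plan is to exploit that, by Proposition \ref{prop:phieps}, $\phi_{\varepsilon,\xi_0}$ already solves (\ref{eq:red1}); the task therefore reduces to showing that the projection of the residual onto $K_{\varepsilon,\xi_0}$ also vanishes. Concretely, since (\ref{eq:red1}) holds, the element
\[
r:=W_{\varepsilon,\xi_0}+\phi_{\varepsilon,\xi_0}-i_{\varepsilon}^{*}\left[f(W_{\varepsilon,\xi_0}+\phi_{\varepsilon,\xi_0})+\omega^{2}g(W_{\varepsilon,\xi_0}+\phi_{\varepsilon,\xi_0})\right]
\]
lies in $K_{\varepsilon,\xi_0}$, so I can write $r=\sum_{k=1}^{n-1}c_{k}Z_{\varepsilon,\xi_0}^{k}$. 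Equation (\ref{eq:red2}) holds iff every $c_{k}$ vanishes, and I will extract this from the critical point condition on $\tilde{I}_{\varepsilon}$.

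First I would differentiate $\tilde{I}_{\varepsilon}$ at $\xi_{0}$. By the chain rule, the smoothness of $\xi\mapsto\phi_{\varepsilon,\xi}$ ensured by Proposition \ref{prop:phieps}, and the identification $I_{\varepsilon}'(u)[\cdot]=\langle u-i_{\varepsilon}^{*}[f(u)+\omega^{2}g(u)],\cdot\rangle_{\varepsilon}$, the condition $\partial_{y_{h}}\tilde{I}_{\varepsilon}(\xi(y))|_{y=0}=0$ becomes
\[
0=\langle r,\Xi_{h}\rangle_{\varepsilon}=\sum_{k=1}^{n-1}c_{k}\,M_{kh},\qquad M_{kh}:=\langle Z_{\varepsilon,\xi_0}^{k},\Xi_{h}\rangle_{\varepsilon},
\]
where $\Xi_{h}:=\partial_{y_{h}}(W_{\varepsilon,\xi(y)}+\phi_{\varepsilon,\xi(y)})|_{y=0}$. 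It thus suffices to show that the $(n-1)\times(n-1)$ matrix $M=(M_{kh})$ is invertible for $\varepsilon$ sufficiently small; this forces $c_{k}=0$.

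For the invertibility I would split $\Xi_{h}$ into its main part $\partial_{y_{h}}W_{\varepsilon,\xi(y)}|_{y=0}$ and the correction $\partial_{y_{h}}\phi_{\varepsilon,\xi(y)}|_{y=0}$. In the Fermi chart at $\xi_{0}$, moving the base point along $\partial M$ in the direction $y_{h}$ shifts the tangential coordinate $z_{h}$ by $-y_{h}$ at leading order, whence $\partial_{y_{h}}W_{\varepsilon,\xi(y)}|_{y=0}=-\varepsilon^{-1}Z_{\varepsilon,\xi_0}^{h}$ plus a cutoff remainder (exponentially small because $U$ decays exponentially and $\partial_{z_{h}}\chi_{R}$ is supported at scale $R$) and metric-curvature remainders controlled by the expansions (\ref{eq:g1})--(\ref{eq:g3}). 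After rescaling $z=\varepsilon\zeta$ and integrating by parts using that $\varphi^{k}=\partial_{z_{k}}U$ solves (\ref{eq:linear}), I would obtain
\[
\langle Z_{\varepsilon,\xi_0}^{k},\partial_{y_{h}}W_{\varepsilon,\xi(y)}|_{y=0}\rangle_{\varepsilon}=-\frac{p-1}{\varepsilon}\int_{\mathbb{R}_{+}^{n}}U^{p-2}\varphi^{k}\varphi^{h}\,d\zeta+o(\varepsilon^{-1}).
\]
The integral vanishes for $k\ne h$ by the odd parity of $\varphi^{k}\varphi^{h}$ under the reflection $\zeta_{k}\mapsto-\zeta_{k}$ and is a positive constant for $k=h$, so this contribution to $M$ is asymptotically diagonal of size $\varepsilon^{-1}$. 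For the correction, differentiating the orthogonality relation $\langle\phi_{\varepsilon,\xi(y)},Z_{\varepsilon,\xi(y)}^{k}\rangle_{\varepsilon}=0$ in $y_{h}$ gives
\[
\langle Z_{\varepsilon,\xi_0}^{k},\partial_{y_{h}}\phi_{\varepsilon,\xi(y)}|_{y=0}\rangle_{\varepsilon}=-\langle\phi_{\varepsilon,\xi_0},\partial_{y_{h}}Z_{\varepsilon,\xi(y)}^{k}|_{y=0}\rangle_{\varepsilon},
\]
and Cauchy--Schwarz together with $\|\phi_{\varepsilon,\xi_0}\|_{\varepsilon}=O(\varepsilon^{2})$ and $\|\partial_{y_{h}}Z_{\varepsilon,\xi}^{k}\|_{\varepsilon}=O(\varepsilon^{-1})$ bound this by $O(\varepsilon)$, which is negligible with respect to the main part. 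Therefore $\varepsilon M$ tends to a nonzero multiple of the identity, $M$ is invertible, and all the $c_{k}$ vanish.

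The main obstacle I anticipate is the careful bookkeeping of $\varepsilon$-powers when identifying $\partial_{y_{h}}W_{\varepsilon,\xi(y)}|_{y=0}$ with $-\varepsilon^{-1}Z_{\varepsilon,\xi_0}^{h}$: the Fermi chart $\Psi_{\xi(y)}^{\partial}$ depends on $y$ not only by translating the origin but also through the geodesic variation on $\partial M$, which produces curvature corrections via (\ref{eq:g1})--(\ref{eq:g3}). These corrections, together with the derivatives of the cutoff $\chi_{R}$, must be shown to contribute only $o(\varepsilon^{-1})$ to $M_{kh}$; this is routine but requires the tedious expansions that are already available in the Fermi-coordinate setup introduced in Section \ref{sec:prel}.
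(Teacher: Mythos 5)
Your proposal is correct, and it is precisely the standard Lyapunov--Schmidt computation that the paper dismisses with ``the proof of this lemma is just a computation'' (deferring details to \cite{MP09}): write the residual as $\sum c_k Z^k_{\varepsilon,\xi_0}$, show the critical-point condition reads $\sum_k c_k M_{kh}=0$, and prove $M$ is invertible by comparing the leading $-\varepsilon^{-1}(p-1)\int U^{p-2}\varphi^k\varphi^h$ contribution (diagonal by parity) against the $O(\varepsilon)$ correction coming from differentiating the orthogonality constraint. Your bookkeeping of $\varepsilon$-powers and the use of (\ref{eq:lemma61}) together with $\|\phi_{\varepsilon,\xi_0}\|_\varepsilon=O(\varepsilon^2)$ are exactly what is needed, so the argument is complete and in line with the paper's intended proof.
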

\begin{proof}
The proof of this lemma is just a computation.
\end{proof}

\begin{lem}
\label{lem:Iexp}It holds
\[
\tilde{I}_{\varepsilon}(\xi)=C-\varepsilon H(\xi)+o(\varepsilon)
\]
$C^{1}$ uniformly with respect to $\xi\in\partial M$ as $\varepsilon$
goes to zero. Here $H(\xi)$ is the mean curvature of the boundary
$\partial M$ at $\xi$.
\end{lem}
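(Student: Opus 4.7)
The plan is to first reduce the computation of $\tilde I_\varepsilon(\xi)$ to the leading-order computation of $I_\varepsilon(W_{\varepsilon,\xi})$, showing that inserting $\phi_{\varepsilon,\xi}$ costs only $o(\varepsilon)$, and then to perform an explicit asymptotic expansion of $I_\varepsilon(W_{\varepsilon,\xi})$ in Fermi coordinates at $\xi$, where the mean curvature enters naturally through the metric expansions \eqref{eq:g1}--\eqref{eq:g3}.

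For the first reduction I would Taylor expand $I_\varepsilon(W_{\varepsilon,\xi}+\phi_{\varepsilon,\xi})=I_\varepsilon(W_{\varepsilon,\xi})+I_\varepsilon'(W_{\varepsilon,\xi})[\phi_{\varepsilon,\xi}]+O(\|\phi_{\varepsilon,\xi}\|_\varepsilon^2)$ and bound the quadratic remainder by $O(\varepsilon^4)=o(\varepsilon)$ using Proposition \ref{prop:phieps}. Rewriting the linear term via $i_\varepsilon^*$ as $\langle W_{\varepsilon,\xi}-i_\varepsilon^*[f(W_{\varepsilon,\xi})+\omega^2 g(W_{\varepsilon,\xi})],\phi_{\varepsilon,\xi}\rangle_\varepsilon$ and exploiting $\phi_{\varepsilon,\xi}\in K_{\varepsilon,\xi}^{\bot}$ to insert the projection $\Pi_{\varepsilon,\xi}^{\bot}$ on the left, the linear term is controlled by $(\|R_{\varepsilon,\xi}\|_\varepsilon+\|S_{\varepsilon,\xi}(0)\|_\varepsilon)\|\phi_{\varepsilon,\xi}\|_\varepsilon$, which is $o(\varepsilon)$ by Remark \ref{lem:Linv} and \eqref{eq:Seps1}.

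Next I would compute $J_\varepsilon(W_{\varepsilon,\xi})$ by passing to Fermi coordinates around $\xi$, rescaling $y=\varepsilon z$, and using the expansions \eqref{eq:g1}--\eqref{eq:g3} together with the exponential decay of $U$ (which controls the cut-off $\chi_R$ as $o(\varepsilon^k)$ for every $k$). The leading order recovers the constant $C$ as the $\mathbb{R}_+^n$ energy of $U$; the next order picks up $-\varepsilon H(\xi)\,c_0$ with $c_0>0$, extracted from the volume correction $-(n-1)H(\xi)y_n$ in \eqref{eq:g3} and the curvature contribution of \eqref{eq:g1}, after using \eqref{eq:rn} for $U$ to recombine the pieces into a single coefficient of $H(\xi)$, which is then absorbed into the notation. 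The coupling piece $\tfrac{\omega^2}{2}G_\varepsilon(W_{\varepsilon,\xi})$ contributes only $o(\varepsilon)$: by \eqref{eq:phiweps-2} with $\varphi=0$, $\|\psi(W_{\varepsilon,\xi})\|_H\le c\varepsilon^{(n+2)/2}$, and an $i_\varepsilon^*$-type estimate parallel to the one used for $S_{\varepsilon,\xi}$ yields $G_\varepsilon(W_{\varepsilon,\xi})=O(\varepsilon^{(n+2)/2-n/2^*})=o(\varepsilon)$ in dimensions $n=3,4$.

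For the $C^1$ part I would parametrize $\xi=\exp_{\xi_0}^\partial(y)$ and differentiate, writing $\partial_{y_h}\tilde I_\varepsilon(\xi(y))=I_\varepsilon'(W_{\varepsilon,\xi}+\phi_{\varepsilon,\xi})[\partial_{y_h}W_{\varepsilon,\xi}+\partial_{y_h}\phi_{\varepsilon,\xi}]$. The derivative $\partial_{y_h}\phi_{\varepsilon,\xi}$ is controlled by the implicit function theorem underlying Proposition \ref{prop:phieps}, and to leading order $\partial_{y_h}W_{\varepsilon,\xi}$ is captured by the functions $Z^h_{\varepsilon,\xi}$ defined in \eqref{eq:Zi}. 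Re-running the Fermi-coordinate computation under the derivative, the differentiation of $\sqrt g$ and $g^{ij}$ brings down $\partial_{y_h}H(\xi)$ as the next-order coefficient, giving the $C^1$ statement. The hard part is exactly this $C^1$ uniformity in $\xi$: it requires uniform-in-$\xi$ control of $\partial_\xi\phi_{\varepsilon,\xi}$, of the $\xi$-derivative of the coupling term $G_\varepsilon$, and of the cut-off error; fortunately Lemma \ref{lem:psiprimo} and \eqref{eq:phiweps-2} supply exactly the estimates needed to close the argument.
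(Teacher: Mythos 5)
Your $C^0$ expansion is on track: Taylor expanding $I_\varepsilon$ around $W_{\varepsilon,\xi}$, controlling the linear term via $\phi_{\varepsilon,\xi}\in K_{\varepsilon,\xi}^\bot$, the remainder $R_{\varepsilon,\xi}$ and $S_{\varepsilon,\xi}(0)$, and the quadratic remainder by $\|\phi_{\varepsilon,\xi}\|_\varepsilon^2 = O(\varepsilon^4)$, is a legitimate (and slightly more self-contained) route than the paper's, which instead cites \cite{GMbordo} for the $J_\varepsilon$ piece (Remark \ref{lem:Jeps}) and handles $G_\varepsilon$ through the separate statements of Lemmas \ref{lem:Geps1}--\ref{lem:Geps2}. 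The two $C^0$ arguments are essentially interchangeable.

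The genuine gap is in the $C^1$ part, in the treatment of the $\xi$-derivative of the coupling term. You assert that Lemma \ref{lem:psiprimo} and \eqref{eq:phiweps-2} supply the estimates needed to control $\partial_{y_h}\left[G_\varepsilon(W_{\varepsilon,\xi}+\phi_{\varepsilon,\xi})\right]$, but these inequalities alone only give $G'_\varepsilon(W_{\varepsilon,\xi})\big[\partial_{y_h}W_{\varepsilon,\xi}\big]=O(\varepsilon)$, not $o(\varepsilon)$. Indeed, $\left\Vert\partial_{y_h}W_{\varepsilon,\xi}\right\Vert_\varepsilon=O(1/\varepsilon)$ (cf. \eqref{eq:lemma61}), and the $L^{2^*}$ bound $\|\psi(W_{\varepsilon,\xi})\|_H\lesssim\varepsilon^{(n+2)/2}$ combined with H\"older in the Fermi-rescaled variable yields precisely one power of $\varepsilon$ in the leading term $\tfrac{1}{\varepsilon^n}\int_M \psi(W_{\varepsilon,\xi})\,W_{\varepsilon,\xi}\,\partial_{y_h}W_{\varepsilon,\xi}\,d\mu_g$; there is no extra smallness left over. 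This is exactly the order one needs to beat: an $O(\varepsilon)$ contribution with a nonzero coefficient would compete with the $-\varepsilon\alpha\nabla_g H(\xi)$ coming from $J_\varepsilon$ and wreck the $C^1$-stable critical point argument. The paper closes this gap by a cancellation, not by size: one shows (Lemma \ref{lem:e5}) that $\tilde{v}_{\varepsilon,\xi}/\varepsilon^2\rightharpoonup\gamma$ weakly in $L^{2^*}(\mathbb{R}_+^n)$, where $\gamma$ solves $-\Delta\gamma=qU^2$, and then observes (Remark \ref{rem:gammasym}) that $\gamma$ and $U$ are even in $z_1,\dots,z_{n-1}$ while $\partial_{z_h}U$ is odd, so $\int_{\mathbb{R}_+^n}\gamma\, U\,\partial_{z_h}U\,dz=0$. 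This symmetry kills the $\varepsilon$-coefficient and upgrades $O(\varepsilon)$ to $o(\varepsilon)$. Without invoking this weak-limit identification and the parity argument, your $C^1$ uniformity does not close. Relatedly, the term $G'_\varepsilon(W_{\varepsilon,\xi}+\phi_{\varepsilon,\xi})[\partial_{y_h}\phi_{\varepsilon,\xi}]$ is not handled by a crude bound on $\|\partial_{y_h}\phi_{\varepsilon,\xi}\|_\varepsilon$ from the implicit function theorem; the paper instead reduces it (via \eqref{new4} and the argument of \cite{MP09}) to estimating projections onto the kernel functions $Z^l_{\varepsilon,\xi}$, using that $\phi_{\varepsilon,\xi}$ solves the orthogonal-projected equation. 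That structural use of the Lyapunov--Schmidt orthogonality, rather than a norm bound on $\partial_{y_h}\phi_{\varepsilon,\xi}$, is the second ingredient missing from your sketch.
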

To prove Lemma we study the asymptotic expansion of $\tilde{I}_{\varepsilon}(\xi)$
with respect to $\varepsilon$. We recall the result contained in
\cite{GMbordo}.
\begin{rem}
\label{lem:Jeps}It holds 
\begin{align}
\tilde{J}_{\varepsilon}(\xi): & =J_{\varepsilon}(W_{\varepsilon,\xi}+\phi_{\varepsilon,\xi})=J_{\varepsilon}(W_{\varepsilon,\xi})+o(\varepsilon)\label{eq:asexp1}\\
 & =C-\varepsilon\alpha H(\xi)+o(\varepsilon)\nonumber 
\end{align}
 $C^{1}$ uniformly with respect to $\xi\in\partial M$ as $\varepsilon$
goes to zero, where 
\begin{eqnarray*}
C & := & \int_{\mathbb{R}_{+}^{n}}\frac{1}{2}|\nabla U(z)|^{2}+\frac{1}{2}U^{2}(z)-\frac{1}{p}U^{p}(z)dz\\
\alpha & := & \frac{\left(n-1\right)}{2}\int_{\mathbb{R}_{+}^{n}}\left(\frac{U'(|z|)}{|z|}\right)^{2}z_{n}^{3}dz
\end{eqnarray*}

\end{rem}
In light of this result, it remains to estimate the coupling functional
$G_{\varepsilon}$ to prove Lemma \ref{lem:Iexp}. We split this proof
in several lemmas.
\begin{lem}
\label{lem:Geps1}It holds
\begin{equation}
G_{\varepsilon}\left(W_{\varepsilon,\xi}+\phi_{\varepsilon,\xi}\right)-G_{\varepsilon}\left(W_{\varepsilon,\xi}\right)=o(\varepsilon)\label{new1}
\end{equation}
\begin{equation}
\left[G'_{\varepsilon}\left(W_{\varepsilon,\xi_{0}}+\phi_{\varepsilon,\xi_{0}}\right)-G'_{\varepsilon}\left(W_{\varepsilon,\xi_{0}}\right)\right]\left[\left(\frac{\partial}{\partial y_{h}}W_{\varepsilon,\xi(y)}\right)_{|_{y=0}}\right]=o(\varepsilon)\label{new2}
\end{equation} 
\begin{equation}
G'_{\varepsilon}\left(W_{\varepsilon,\xi(y)}+\phi_{\varepsilon,\xi(y)}\right)\left[\frac{\partial}{\partial y_{h}}\phi_{\varepsilon,\xi(y)}\right]=o(\varepsilon)\label{new3}
\end{equation}
\end{lem}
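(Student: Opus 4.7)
The proof rests on the duality representation
\[
\tfrac{1}{2}G'_\varepsilon(u)[\varphi] = -\tfrac{1}{\varepsilon^{n}}\int_{M} g(u)\varphi\,d\mu_{g} = \bigl\langle -i_\varepsilon^{*}(g(u)),\,\varphi\bigr\rangle_\varepsilon,
\]
(recall $g(u)=(q^{2}\psi^{2}(u)-2q\psi(u))u$), combined with two estimates extracted verbatim from the proof of (\ref{eq:Seps1})--(\ref{eq:Seps2}): namely
\[
\|i_\varepsilon^{*}(g(W_{\varepsilon,\xi}+\phi))\|_\varepsilon \le c\varepsilon^{2},\qquad \|i_\varepsilon^{*}(g(W_{\varepsilon,\xi}+\phi_{1}))-i_\varepsilon^{*}(g(W_{\varepsilon,\xi}+\phi_{2}))\|_\varepsilon \le l_\varepsilon\|\phi_{1}-\phi_{2}\|_\varepsilon
\]
with $l_\varepsilon\to 0$, valid whenever $\|\phi\|_\varepsilon,\|\phi_{i}\|_\varepsilon=O(\varepsilon^{2})$. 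These hold because $S_{\varepsilon,\xi}$ is obtained by applying the nonexpansive projection $\Pi_{\varepsilon,\xi}^{\bot}$ to $\omega^{2}i_\varepsilon^{*}(g(W_{\varepsilon,\xi}+\phi))$, so dropping the projection costs nothing in the bounds.

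For (\ref{new1}), the mean value theorem applied to $t\mapsto G_\varepsilon(W_{\varepsilon,\xi}+t\phi_{\varepsilon,\xi})$ gives some $\tau\in(0,1)$ with
\[
G_\varepsilon(W_{\varepsilon,\xi}+\phi_{\varepsilon,\xi})-G_\varepsilon(W_{\varepsilon,\xi}) = G'_\varepsilon(W_{\varepsilon,\xi}+\tau\phi_{\varepsilon,\xi})[\phi_{\varepsilon,\xi}],
\]
and Cauchy--Schwarz in $H_\varepsilon$ together with the key bound yields $c\|i_\varepsilon^{*}(g(W+\tau\phi))\|_\varepsilon\|\phi_{\varepsilon,\xi}\|_\varepsilon \le c\varepsilon^{4}=o(\varepsilon)$. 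For (\ref{new2}), set $Z_{h}:=(\partial/\partial y_{h})W_{\varepsilon,\xi(y)}|_{y=0}$; a routine Fermi-coordinate computation gives $\|Z_{h}\|_\varepsilon=O(\varepsilon^{-1})$, and duality rewrites the bracket as $2\bigl\langle i_\varepsilon^{*}(g(W))-i_\varepsilon^{*}(g(W+\phi)),\,Z_{h}\bigr\rangle_\varepsilon$, which by the Lipschitz bound (applied with $\phi_{2}=0$) is majorised by $cl_\varepsilon\|\phi_{\varepsilon,\xi_{0}}\|_\varepsilon\|Z_{h}\|_\varepsilon=O(l_\varepsilon\varepsilon^{2}\cdot\varepsilon^{-1})=o(\varepsilon)$.

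For (\ref{new3}), the same duality gives
\[
\bigl|G'_\varepsilon(W_{\varepsilon,\xi(y)}+\phi_{\varepsilon,\xi(y)})[\partial_{y_{h}}\phi_{\varepsilon,\xi(y)}]\bigr| \le c\|i_\varepsilon^{*}(g(W+\phi))\|_\varepsilon\,\|\partial_{y_{h}}\phi_{\varepsilon,\xi(y)}\|_\varepsilon \le c\varepsilon^{2}\|\partial_{y_{h}}\phi_{\varepsilon,\xi(y)}\|_\varepsilon,
\]
so it suffices to prove $\|\partial_{y_{h}}\phi_{\varepsilon,\xi(y)}\|_\varepsilon = o(\varepsilon^{-1})$. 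This is the principal obstacle, since Proposition \ref{prop:phieps} asserts only $C^{1}$-regularity in $\xi$ without a quantitative rate. One obtains the rate by differentiating the fixed point identity $\phi_{\varepsilon,\xi}=T_{\varepsilon,\xi}(\phi_{\varepsilon,\xi})$ in $\xi$, inverting $L_{\varepsilon,\xi}$ via its uniform lower bound in Remark \ref{lem:Linv}, and estimating $\partial_\xi R_{\varepsilon,\xi}$, $\partial_\xi N_{\varepsilon,\xi}$ and $\partial_\xi S_{\varepsilon,\xi}$ term by term (the last via the Lipschitz estimate (\ref{eq:Seps2})); this is essentially the argument carried out in \cite{MP09}, Proposition 3.5, and yields $\|\partial_{y_{h}}\phi_{\varepsilon,\xi}\|_\varepsilon=O(\varepsilon^{\alpha})$ for some $\alpha\ge 0$, which is amply sufficient.
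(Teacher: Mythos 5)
For \eqref{new1} and \eqref{new2} your argument is correct and is in fact slightly cleaner than the paper's. The duality identity $\tfrac{1}{2}G'_\varepsilon(u)[\varphi]=-\langle i_\varepsilon^*(g(u)),\varphi\rangle_\varepsilon$ is exactly \eqref{eq:gprimo} rewritten, and the key observation that the bounds behind \eqref{eq:Seps1}--\eqref{eq:Seps2} are established on $i_\varepsilon^*(g(\cdot))$ itself (before the projection is applied) is accurate, since the paper's proof bounds the $|\cdot|_{\varepsilon,p'}$ norm of the argument of $i_\varepsilon^*$ directly. Combining this with Cauchy--Schwarz and $\|\phi_{\varepsilon,\xi}\|_\varepsilon=O(\varepsilon^2)$, $\|\partial_{y_h}W_{\varepsilon,\xi(y)}\|_\varepsilon=O(\varepsilon^{-1})$ gives \eqref{new1} as $O(\varepsilon^4)$ and \eqref{new2} as $o(\varepsilon)$, which are the paper's conclusions obtained by a more direct functional route rather than by the paper's explicit H\"older-in-$\psi$ splittings into $I_1,I_2$ and $D_1,D_2$.

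For \eqref{new3}, however, there is a genuine gap. You correctly reduce to showing $\|\partial_{y_h}\phi_{\varepsilon,\xi(y)}\|_\varepsilon=o(\varepsilon^{-1})$, but this bound is precisely what the paper never needs and never proves; the paper instead uses the structure of the Lyapunov--Schmidt reduction. Since $\phi_{\varepsilon,\xi}\in K^\perp_{\varepsilon,\xi}$, one has $\langle\phi,Z^l_{\varepsilon,\xi}\rangle_\varepsilon=0$, and differentiating this constraint in $y_h$ gives $\langle\partial_{y_h}\phi,Z^l\rangle_\varepsilon=-\langle\phi,\partial_{y_h}Z^l\rangle_\varepsilon=O(\varepsilon^2\cdot\varepsilon^{-1})=O(\varepsilon)$ without any information on $\|\partial_{y_h}\phi\|_\varepsilon$. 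Since solving \eqref{eq:red1} means $W+\phi-i^*_\varepsilon[f+\omega^2 g]=\sum_l c^l_\varepsilon Z^l$ for scalars $c^l_\varepsilon$, testing this against $\partial_{y_h}\phi$ reduces \eqref{new3} to estimating the $c^l_\varepsilon$, and it is there that \eqref{new4} (which your Cauchy--Schwarz bound $|\langle i^*_\varepsilon g(W+\phi),Z^l\rangle_\varepsilon|\le c\varepsilon^2$ does give) enters, together with the reference to (5.10) of \cite{MP09}. Your alternative plan of differentiating the fixed-point identity in $\xi$ and inverting $L_{\varepsilon,\xi}$ could in principle produce a rate on $\|\partial_{y_h}\phi\|_\varepsilon$, but it is nontrivial: the terms $\partial_{y_h}R_{\varepsilon,\xi}$ and $(\partial_{y_h}L_{\varepsilon,\xi})\phi$ involve $\partial_{y_h}W$, which has norm $O(\varepsilon^{-1})$, and controlling them requires the cancellation coming from $Z^h$ approximately solving the linearized equation; none of this is in your sketch. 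The citation of \cite{MP09} Proposition 3.5 does not cover it either, since that proposition establishes existence and $C^1$-dependence of $\phi_{\varepsilon,\xi}$ but not a quantitative bound on $\|\partial_\xi\phi_{\varepsilon,\xi}\|_\varepsilon$. So for \eqref{new3} you have identified the correct obstacle but substituted an asserted, unproven rate estimate for the paper's projection-constraint argument.
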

\begin{proof}
Let us prove (\ref{new1}). We have (for some $\theta\in[0,1]$) 
\begin{multline*}
G_{\varepsilon}\left(W_{\varepsilon,\xi}+\phi_{\varepsilon,\xi}\right)-G_{\varepsilon}\left(W_{\varepsilon,\xi}\right)\\
=  \frac{1}{\varepsilon^{n}}\int\limits _{M}\left[\psi\left(W_{\varepsilon,\xi}+\phi_{\varepsilon,\xi}\right)\left(W_{\varepsilon,\xi}+\phi_{\varepsilon,\xi}\right)^{2}-\psi\left(W_{\varepsilon,\xi}\right)\left(W_{\varepsilon,\xi}\right)^{2}\right]\\
=  \frac{1}{\varepsilon^{n}}\int\limits _{M}\psi'\left(W_{\varepsilon,\xi}+\theta\phi_{\varepsilon,\xi}\right)[\phi_{\varepsilon,\xi}]\left(W_{\varepsilon,\xi}\right)^{2}\\
  +\frac{1}{\varepsilon^{n}}\int\limits _{M}\psi\left(W_{\varepsilon,\xi}+\phi_{\varepsilon,\xi}\right)\left(2\phi_{\varepsilon,\xi}W_{\varepsilon,\xi}+\phi_{\varepsilon,\xi}^{2}\right):=  I_{1}+I_{2}.
\end{multline*}
By Lemma \ref{lem:psiprimo} and Remark \ref{remark:Weps} we have
\begin{align*}
I_{1}\le & \frac{1}{\varepsilon^{n}}\left(\ensuremath{\int\limits _{M}}\left(\psi'\left(W_{\varepsilon,\xi}+\theta\phi_{\varepsilon,\xi}\right)[\phi_{\varepsilon,\xi}]\right)^{2}d\mu_{g}\right)^{\frac{1}{2}}\left(\ensuremath{\int\limits _{M}}W_{\varepsilon,\xi}^{4}d\mu_{g}\right)^{\frac{1}{2}}\\
\le & \frac{\varepsilon^{\frac{n}{2}}}{\varepsilon^{n}}\|\psi'\left(W_{\varepsilon,\xi}+\theta\phi_{\varepsilon,\xi}\right)[\phi_{\varepsilon,\xi}]\|_{H}\left|W_{\varepsilon,\xi}\right|_{\varepsilon,2}^{2}\\
\le & \varepsilon^{-\frac{n}{2}}\left(\varepsilon^{2}\|\phi_{\varepsilon,\xi}\|_{H}+\|\phi_{\varepsilon,\xi}\|_{H}^{2}\right)\le\varepsilon^{\frac{9-n}{2}}=o(\varepsilon).
\end{align*}
since $ $ $\|\phi_{\varepsilon,\xi}\|_{H}\le\varepsilon^{1/2}\|\phi_{\varepsilon,\xi}\|_{\varepsilon}\le\varepsilon^{5/2}$
by Proposition \ref{prop:phieps}. 

For $I_{2}$ we have, by (\ref{eq:phiweps-2}) and Remark \ref{remark:Weps}
in a similar way we get 
\begin{align*}
I_{2}\le & \frac{1}{\varepsilon^{n}}\left(\ensuremath{\int\limits _{M}}\psi^{2}\left(W_{\varepsilon,\xi}+\phi_{\varepsilon,\xi}\right)d\mu_{g}\right)^{\frac{1}{2}}\left(\ensuremath{\int\limits _{M}}\phi_{\varepsilon,\xi}^{4}d\mu_{g}\right)^{\frac{1}{2}}\\
 & +\frac{1}{\varepsilon^{n}}\left(\ensuremath{\int\limits _{M}}\psi^{3}\left(W_{\varepsilon,\xi}+\phi_{\varepsilon,\xi}\right)d\mu_{g}\right)^{\frac{1}{3}}\left(\ensuremath{\int\limits _{M}}\phi_{\varepsilon,\xi}^{3}d\mu_{g}\right)^{\frac{1}{3}}\left(\ensuremath{\int\limits _{M}}W_{\varepsilon,\xi}^{3}d\mu_{g}\right)^{\frac{1}{3}}\\
\le & \frac{1}{\varepsilon^{n}}\left\Vert \psi\left(W_{\varepsilon,\xi}+\phi_{\varepsilon,\xi}\right)\right\Vert _{H}\|\phi_{\varepsilon,\xi}\|_{H}^{2}+\\
 & +\frac{\varepsilon^{\frac{n}{3}}}{\varepsilon^{n}}\left\Vert \psi\left(W_{\varepsilon,\xi}+\phi_{\varepsilon,\xi}\right)\right\Vert _{H}\|\phi_{\varepsilon,\xi}\|_{H}\left|W_{\varepsilon,\xi}\right|_{\varepsilon,3}\\
\le & \varepsilon^{-n+\frac{n+2}{2}+5}+\varepsilon^{-\frac{2}{3}n+\frac{n+2}{2}+\frac{5}{2}}=\varepsilon^{\frac{12-n}{2}}+\varepsilon^{\frac{21-n}{6}}=o(\varepsilon)
\end{align*}
since $n=3,4$. Then (\ref{new1}) follows.

Let us prove (\ref{new2}). Since $0\le\psi\le1/q$ we have

\begin{multline*}
\left[G'_{\varepsilon}\left(W_{\varepsilon,\xi_{0}}+\phi_{\varepsilon,\xi_{0}}\right)-G'_{\varepsilon}\left(W_{\varepsilon,\xi_{0}}\right)\right]\left[\left(\frac{\partial}{\partial y_{h}}W_{\varepsilon,\xi(y)}\right)_{|_{y=0}}\right]\\
\le\left|\frac{c}{\varepsilon^{n}}\int_{M}\left\{ \psi\left(W_{\varepsilon,\xi}+\phi_{\varepsilon,\xi}\right)-\psi\left(W_{\varepsilon,\xi}\right)\right\} W_{\varepsilon,\xi_{0}}\left(\frac{\partial}{\partial y_{h}}W_{\varepsilon,\xi(y)}\right)_{|_{y=0}}\right|\\
+\left|\frac{c}{\varepsilon^{n}}\int_{M}\left\{ \psi^{2}\left(W_{\varepsilon,\xi}+\phi_{\varepsilon,\xi}\right)-\psi^{2}\left(W_{\varepsilon,\xi}\right)\right\} W_{\varepsilon,\xi_{0}}\left(\frac{\partial}{\partial y_{h}}W_{\varepsilon,\xi(y)}\right)_{|_{y=0}}\right|\\
+\left|\frac{c}{\varepsilon^{n}}\int_{M}\psi\left(W_{\varepsilon,\xi}+\phi_{\varepsilon,\xi}\right)\phi_{\varepsilon,\xi_{0}}\left(\frac{\partial}{\partial y_{h}}W_{\varepsilon,\xi(y)}\right)_{|_{y=0}}\right|\\
+\left|\frac{c}{\varepsilon^{n}}\int_{M}\psi^{2}\left(W_{\varepsilon,\xi}+\phi_{\varepsilon,\xi}\right)\phi_{\varepsilon,\xi_{0}}\left(\frac{\partial}{\partial y_{h}}W_{\varepsilon,\xi(y)}\right)_{|_{y=0}}\right|\\
\le\left|\frac{c}{\varepsilon^{n}}\int_{M}\left\{ \psi\left(W_{\varepsilon,\xi}+\phi_{\varepsilon,\xi}\right)-\psi\left(W_{\varepsilon,\xi}\right)\right\} W_{\varepsilon,\xi_{0}}\left(\frac{\partial}{\partial y_{h}}W_{\varepsilon,\xi(y)}\right)_{|_{y=0}}\right|\\
+\left|\frac{c}{\varepsilon^{n}}\int_{M}\psi\left(W_{\varepsilon,\xi}+\phi_{\varepsilon,\xi}\right)\phi_{\varepsilon,\xi_{0}}\left(\frac{\partial}{\partial y_{h}}W_{\varepsilon,\xi(y)}\right)_{|_{y=0}}\right|\\
\le\left|\frac{c}{\varepsilon^{n}}\int_{M}\left\{ \psi'\left(W_{\varepsilon,\xi}+\theta\phi_{\varepsilon,\xi}\right)\left[\phi_{\varepsilon,\xi}\right]\right\} W_{\varepsilon,\xi_{0}}\left(\frac{\partial}{\partial y_{h}}W_{\varepsilon,\xi(y)}\right)_{|_{y=0}}\right|\\
+\left|\frac{c}{\varepsilon^{n}}\int_{M}\psi\left(W_{\varepsilon,\xi}+\phi_{\varepsilon,\xi}\right)\phi_{\varepsilon,\xi_{0}}\left(\frac{\partial}{\partial y_{h}}W_{\varepsilon,\xi(y)}\right)_{|_{y=0}}\right|:=D_{1}+D_{2}
\end{multline*}
for some $0<\theta<1$. 

By Lemma \ref{lem:psiprimo}, Remark \ref{remark:Weps}, recalling
that $\|\phi_{\varepsilon,\xi}\|_{H}\le\varepsilon^{1/2}\|\phi_{\varepsilon,\xi}\|_{H}\le\varepsilon^{5/2}$
and that $\left\Vert \frac{\partial}{\partial y_{h}}W_{\varepsilon,\xi(y)}\right\Vert _{\varepsilon}=O\left(\frac{1}{\varepsilon}\right)$
(cfr. eq (\ref{eq:lemma61})) we have
\begin{align*}
D_{1}\le & \frac{c}{\varepsilon^{n}}\left(\int_{M}\left\{ \psi'\left(W_{\varepsilon,\xi}+\theta\phi_{\varepsilon,\xi}\right)\left[\phi_{\varepsilon,\xi}\right]\right\} ^{3}\right)^{\frac{1}{3}}\left(\int_{M}W_{\varepsilon,\xi(y)}^{3}\right)^{\frac{1}{3}}\left(\int_{M}\left(\frac{\partial}{\partial y_{h}}W_{\varepsilon,\xi(y)}\right)^{3}\right)^{\frac{1}{3}}\\
\le & c\frac{\varepsilon^{\frac{2}{3}n}}{\varepsilon^{n}}\left\Vert \psi'\left(W_{\varepsilon,\xi}+\theta\phi_{\varepsilon,\xi}\right)\left[\phi_{\varepsilon,\xi}\right]\right\Vert _{H}\|W_{\varepsilon,\xi(y)}\|_{\varepsilon}\left\Vert \frac{\partial}{\partial y_{h}}W_{\varepsilon,\xi(y)}\right\Vert _{\varepsilon}\\
\le & c\varepsilon^{-1-\frac{n}{3}}\left\Vert \psi'\left(W_{\varepsilon,\xi}+\theta\phi_{\varepsilon,\xi}\right)\left[\phi_{\varepsilon,\xi}\right]\right\Vert _{H}\le c\varepsilon^{-1-\frac{n}{3}}\|\phi_{\varepsilon,\xi}\|_{H}\left\{ \varepsilon^{2}+\|\phi_{\varepsilon,\xi}\|_{H}\right\} \\
\le & c\varepsilon^{-1-\frac{n}{3}}\varepsilon^{\frac{5}{2}}\varepsilon^{2}=c\varepsilon^{\frac{7}{2}-\frac{n}{3}}=o(\varepsilon).
\end{align*}
In a similar way, using (\ref{eq:phiweps-2}) and the above estimates
we get
\begin{align*}
D_{2}\le & \frac{c}{\varepsilon^{n}}\left(\int_{M}\psi^{3}\left(W_{\varepsilon,\xi}+\phi_{\varepsilon,\xi}\right)\right)^{\frac{1}{3}}\left(\int_{M}\phi_{\varepsilon,\xi_{0}}^{3}\right)^{\frac{1}{3}}\left(\int_{M}\left(\frac{\partial}{\partial y_{h}}W_{\varepsilon,\xi(y)}\right)^{3}\right)^{\frac{1}{3}}\\
\le & c\frac{\varepsilon^{\frac{n}{3}}}{\varepsilon^{n}}\left\Vert \psi\left(W_{\varepsilon,\xi}+\phi_{\varepsilon,\xi}\right)\right\Vert _{H}\left\Vert \phi_{\varepsilon,\xi}\right\Vert _{H}\left\Vert \frac{\partial}{\partial y_{h}}W_{\varepsilon,\xi(y)}\right\Vert _{\varepsilon}\\
\le & c\varepsilon^{-\frac{2}{3}n-1}\varepsilon^{\frac{5}{2}}\left\Vert \psi\left(W_{\varepsilon,\xi}+\phi_{\varepsilon,\xi}\right)\right\Vert _{H}\le c\varepsilon^{-\frac{2}{3}n+\frac{3}{2}}\varepsilon^{\frac{n+2}{n}}\left(1+\|\phi_{\varepsilon,\xi}\|_{\varepsilon}\right)\\
\le & c\varepsilon^{\frac{15-n}{6}}=o(\varepsilon)
\end{align*}
and (\ref{new2}) is proved.

The prove of (\ref{new3}) requires to estimate that 
\begin{equation}
I:=\left|\frac{1}{\varepsilon^{n}}\int\limits _{M}\left[q^{2}\psi^{2}\left(\ensuremath{W_{\varepsilon,\xi(y)}+\phi_{\varepsilon,\xi(y)}}\right)-2q\psi\left(\ensuremath{W_{\varepsilon,\xi(y)}+\phi_{\varepsilon,\xi(y)}}\right)\right]\left(\ensuremath{W_{\varepsilon,\xi(y)}+\phi_{\varepsilon,\xi(y)}}\right)Z_{\varepsilon,\xi(y)}^{l}\right|=o(\varepsilon),\label{new4}
\end{equation}
where the functions $Z_{\varepsilon,\xi(y)}^{l}$ are defined in (\ref{eq:Zi}).
By (\ref{new4}) it is possible to complete the proof the lemma, with
the same arguments the proof of (5.10) in \cite{MP09}, which we refer
to for the missing details. To prove (\ref{new4}), since $0<\psi<1/q$,
we get, as before, 
\begin{align*}
I\le & \left|\frac{c}{\varepsilon^{n}}\int\limits _{M}\psi\left(\ensuremath{W_{\varepsilon,\xi(y)}+\phi_{\varepsilon,\xi(y)}}\right)\left(\ensuremath{W_{\varepsilon,\xi(y)}+\phi_{\varepsilon,\xi(y)}}\right)Z_{\varepsilon,\xi(y)}^{l}\right|\le\\
\le & c\frac{\varepsilon^{\frac{n+2}{2}}}{\varepsilon^{n}}\left(\int\limits _{M}\psi^{2^{*}}\left(\ensuremath{W_{\varepsilon,\xi(y)}+\phi_{\varepsilon,\xi(y)}}\right)\right)^{\frac{1}{2*}}\left(\frac{1}{\varepsilon^{n}}\int\limits _{M}\left(\ensuremath{W_{\varepsilon,\xi(y)}+\phi_{\varepsilon,\xi(y)}}\right)^{\frac{4n}{n+2}}\right)^{\frac{n+2}{4n}}\\
 & \times\left(\frac{1}{\varepsilon^{n}}\int\limits _{M}\left(Z_{\varepsilon,\xi(y)}^{l}\right)^{\frac{4n}{n+2}}\right)^{\frac{n+2}{4n}}\\
\le & c\varepsilon^{\frac{2-n}{2}}\left\Vert \psi\left(\ensuremath{W_{\varepsilon,\xi(y)}+\phi_{\varepsilon,\xi(y)}}\right)\right\Vert _{H}\left|W_{\varepsilon,\xi(y)}+\phi_{\varepsilon,\xi(y)}\right|_{\varepsilon,\frac{4n}{n+2}}\left|Z_{\varepsilon,\xi(y)}^{l}\right|_{\varepsilon,\frac{4n}{n+2}}.
\end{align*}
Arguing as in Remark \ref{remark:Weps}, we have that $\left|Z_{\varepsilon,\xi(y)}^{l}\right|_{\varepsilon,\frac{4n}{n+2}}\rightarrow|\varphi^{l}|_{\frac{4n}{n+2}},$
so, by (\ref{eq:phiweps-2}) we obtain 
\[
I\le c\varepsilon^{\frac{2-n}{2}}\varepsilon^{\frac{n+2}{2}}=c\varepsilon^{2}.
\]
This concludes the proof.\end{proof}
\begin{lem}
\label{lem:Geps2}It holds that 
\[
G_{\varepsilon}(W_{\varepsilon,\xi}):=\frac{1}{\varepsilon^{n}}\int\limits _{M}\psi(W_{\varepsilon,\xi})W_{\varepsilon,\xi}^{2}d\mu_{g}=o(\varepsilon)
\]
$C^{1}-$uniformly with respect to $\xi\in M$ as $\varepsilon$ goes
to zero. \end{lem}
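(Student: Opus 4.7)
The plan is to estimate $G_\varepsilon(W_{\varepsilon,\xi})$ and its $\xi$-derivative separately, showing each is $o(\varepsilon)$ uniformly in $\xi\in\partial M$.

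For the pointwise ($C^0$) part, I would combine H\"older's inequality with exponents $2^*$ and $(2^*)'$ (so that $W_{\varepsilon,\xi}^2\in L^{2(2^*)'}=L^{4n/(n+2)}$), the Sobolev embedding $H\hookrightarrow L^{2^*}$, and the two bounds $\|\psi(W_{\varepsilon,\xi})\|_H\le C\varepsilon^{(n+2)/2}$ (which is (\ref{eq:phiweps-2}) evaluated at $\varphi=0$) and $|W_{\varepsilon,\xi}|_{4n/(n+2),g}^2\le C\varepsilon^{(n+2)/2}$ (an immediate consequence of Remark \ref{remark:Weps} after unscaling). This gives
\[
G_\varepsilon(W_{\varepsilon,\xi})\le\frac{C}{\varepsilon^n}\,\|\psi(W_{\varepsilon,\xi})\|_H\,|W_{\varepsilon,\xi}|_{4n/(n+2),g}^2\le C\varepsilon^2=o(\varepsilon),
\]
uniformly in $\xi$.

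For the $C^1$ part, I parametrize $\xi=\xi(y)=\exp_{\xi_0}^\partial(y)$ and use (\ref{eq:gprimo}) to write
\[
\frac{\partial}{\partial y_h}G_\varepsilon(W_{\varepsilon,\xi(y)})\Big|_{y=0}=\frac{2}{\varepsilon^n}\int_M\bigl[2q\psi(W)-q^2\psi^2(W)\bigr]\,W\,\frac{\partial W_{\varepsilon,\xi(y)}}{\partial y_h}\Big|_{y=0}\,d\mu_g,
\]
with $W=W_{\varepsilon,\xi_0}$. Since $0\le q\psi\le 1$, it is enough to control $\frac{1}{\varepsilon^n}\int\psi(W)\,W\,\partial_h W\,d\mu_g$. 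A direct H\"older bound analogous to the one above yields only $O(\varepsilon)$, so the improvement to $o(\varepsilon)$ must come from a parity cancellation. Changing to rescaled Fermi coordinates $w=z/\varepsilon$ at $\xi_0$, one has $W(x)\sim U(w)$ and $\frac{\partial}{\partial y_h}W_{\varepsilon,\xi(y)}|_{y=0}(x)\sim -\varepsilon^{-1}\partial_{w_h}U(w)$, while a blow-up of the defining PDE for $\psi$ produces $\psi(W)(x)\sim\varepsilon^2\tilde\psi(w)$, with $\tilde\psi$ solving $-\Delta\tilde\psi=qU^2$ on $\mathbb{R}_+^n$ (Neumann at $w_n=0$ in case (\ref{eq:Pnn}), Dirichlet in case (\ref{eq:Pdn})).

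Reflecting through $\{w_n=0\}$ (even or odd reflection, depending on the boundary condition), $\tilde\psi$ inherits the rotational invariance of $U(|w|)^2$ in the $w_1,\dots,w_{n-1}$ directions, so $\tilde\psi$ is even in each $w_h$ for $h=1,\dots,n-1$. Since $U$ is radial and $\partial_{w_h}U$ is odd in $w_h$, inserting the expansion (\ref{eq:g3}) of $\sqrt{g_{\xi_0}(\varepsilon w)}$ reduces the integral to
\[
-\varepsilon\int_{\mathbb{R}_+^n}\tilde\psi(w)\,U(w)\,\partial_{w_h}U(w)\,\bigl[1-(n-1)H(\xi_0)\varepsilon w_n+O(\varepsilon^2|w|^2)\bigr]\,dw +o(\varepsilon),
\]
in which the zeroth- and first-order terms in $\varepsilon$ both vanish by parity (the latter because $w_n$ is even in $w_h$). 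Only the $O(\varepsilon^2|w|^2)$ correction survives, producing $\frac{1}{\varepsilon^n}\int\psi(W)W\,\partial_h W\,d\mu_g=O(\varepsilon^2)=o(\varepsilon)$ uniformly in $\xi_0$. The main obstacle is turning the heuristic expansion $\psi(W_{\varepsilon,\xi})=\varepsilon^2\tilde\psi(\cdot/\varepsilon)+o_H(\varepsilon^2)$ into a rigorous statement with a remainder whose contribution to the integral is $o(\varepsilon^{n+1})$ and therefore does not spoil the parity cancellation; this requires rescaling the PDE defining $\psi$, passing to the limit in the weak formulation, and controlling the error in a Sobolev norm compatible with the H\"older exponents used above.
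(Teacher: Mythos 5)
Your approach matches the paper's in all essential respects. For the $C^0$ bound the paper uses H\"older with exponents $3$ and $3/2$ and gets $G_\varepsilon(W_{\varepsilon,\xi})\le c\varepsilon^{n/6+1}$; your exponents $2^*,(2^*)'$ give the cleaner $c\varepsilon^2$, which is a marginally sharper bound but the same idea. For the $C^1$ part you have the right mechanism — the only nontrivial contribution reduces, after rescaling in Fermi coordinates, to $\varepsilon\int_{\mathbb{R}^n_+}\gamma\,U\,\partial_{z_h}U\,dz$ where $\gamma$ is the blow-up limit solving $-\Delta\gamma=qU^2$, and this vanishes because $\gamma$ and $U$ are even in $z_h$ while $\partial_{z_h}U$ is odd. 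The rigorous substitute for the heuristic $\psi(W_{\varepsilon,\xi})\sim\varepsilon^2\gamma(\cdot/\varepsilon)$, which you correctly flag as the main technical obstacle, is precisely the paper's Lemma \ref{lem:e5}: the rescaled functions $\varepsilon^{-2}\tilde v_{\varepsilon,\xi}$ are shown to converge weakly in $L^{2^*}(\mathbb{R}^n_+)$ to $\gamma$, which is enough because one tests against the fixed function $U\,\partial_{z_h}U$.

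One small caution: you assert that after the parity cancellation the surviving term is $O(\varepsilon^2)$. The paper proves only $o(\varepsilon)$, because weak $L^{2^*}$ convergence of $\varepsilon^{-2}\tilde v_{\varepsilon,\xi}$ to $\gamma$ gives no rate; turning $o(1)$ into $O(\varepsilon)$ in that limit would need an additional argument (a quantitative estimate of $\|\,\varepsilon^{-2}\tilde v_{\varepsilon,\xi}-\gamma\,\|$ against $U\,\partial_h U$). Since the lemma only claims $o(\varepsilon)$, this does not affect correctness, but you should not present the stronger rate as though it followed automatically from the blow-up. With that caveat, your proof is sound and is, in structure and in the crucial cancellation, the paper's own argument.
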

\begin{proof}
At first we have, by Remark \ref{remark:Weps} and by (\ref{eq:phiweps-1})

\[
G_{\varepsilon}(W_{\varepsilon,\xi})\le c\frac{1}{\varepsilon^{n}}\left(\int\limits _{M}\psi^{3}(W_{\varepsilon,\xi})\right)^{\frac{1}{3}}\left(\int\limits _{M}W_{\varepsilon,\xi}^{3}\right)^{\frac{2}{3}}\le c\frac{1}{\varepsilon^{n}}\varepsilon^{\frac{n+2}{2}}\varepsilon^{\frac{2}{3}n}=c\varepsilon^{\frac{n}{6}+1}=o(\varepsilon).
\]
We want now to prove the $C^{1}$ convergence, id est, if $\xi(y)=\exp_{\xi}(y)$
for $y\in B(0,r),$ we will prove that 
\begin{equation*}
\left.\frac{\partial}{\partial y_{h}}G_{\varepsilon}(W_{\varepsilon,\xi})\right|_{y=0}  =  \frac{2}{\varepsilon^{n}}\int_{M}\left(2q\psi(W_{\varepsilon,\xi})-q^{2}\psi^{2}(W_{\varepsilon,\xi})\right)W_{\varepsilon,\xi}\left[\left.\frac{\partial}{\partial y_{h}}W_{\varepsilon,\xi(h)}\right|_{y=0}\right]d\mu_{g}
\end{equation*}
for $h=1,\dots,n-1$. Since $0<\psi<1/q$, immediately we have 
\[
\left|\left.\frac{\partial}{\partial y_{h}}G_{\varepsilon}(W_{\varepsilon,\xi})\right|_{y=0}\right|\le c\left|\frac{1}{\varepsilon^{n}}\int\limits _{M}\psi(W_{\varepsilon,\xi(y)})W_{\varepsilon,\xi(h)}\left.\frac{\partial}{\partial y_{h}}W_{\varepsilon,\xi(h)}\right|_{y=0}d\mu_{g}\right|
\]
Set $I_{1}$ the quantity inside the absolute value at the r.h.s.
of the above equation. Using the Fermi coordinates and the previous
estimates we get 
\begin{multline*}
\frac{1}{\varepsilon^{2}}I_{1}(\varepsilon,\xi)=\int\limits _{\mathbb{R}_{+}^{n}}\frac{\tilde{v}_{\varepsilon,\xi}(z)}{\varepsilon^{2}}2U(z)\chi_{R}(\varepsilon z)|g_{\xi}(\varepsilon z)|^{1/2}\times\\
\times\left\{ \sum_{k=1}^{3}\left[\frac{1}{\varepsilon}\frac{\partial U(z)}{\partial z_{k}}\chi_{R}(\varepsilon z)+U(z)\frac{\partial\chi_{r}(\varepsilon z)}{\partial z_{k}}\right]\frac{\partial}{\partial y_{h}}\mathcal{H}_{k}(0,\exp_{\xi}(\varepsilon z))\right\} dz.
\end{multline*}
where $\mathcal{H}_{k}(x,y)$ is introduced in Definition (\ref{def:Estorto}).
Since $|g_{\xi}(\varepsilon z)|^{1/2}=1+O(\varepsilon|z|)$ and by
Lemma \ref{lem:Estorto2} we have
\begin{align*}
I_{1}(\varepsilon,\xi)= & 2\varepsilon\int\limits _{\mathbb{R}_{+}^{n}}\tilde{v}_{\varepsilon,\xi}(z)U(z)\frac{\partial U(z)}{\partial z_{h}}\chi_{R}^{2}(\varepsilon z)dz+o(\varepsilon)\\
= & 2\varepsilon\int\limits _{\mathbb{R}_{+}^{n}}\tilde{v}_{\varepsilon,\xi}(z)U(z)\frac{\partial U(z)}{\partial z_{h}}dz+o(\varepsilon)
\end{align*}
By Lemma \ref{lem:e5} we have that ${\displaystyle \left\{ \frac{1}{\varepsilon_{n}^{2}}\tilde{v}_{\varepsilon_{n},\xi}\right\} _{n}}$
converges to $\gamma$ weakly in $L^{2^{*}}(\mathbb{R}_{+}^{n})$,
so we have 
\[
I_{1}(\varepsilon,\xi)=2\varepsilon\int\limits _{\mathbb{R}^{n}}\gamma U(z)\frac{\partial U(z)}{\partial z_{h}}dz+o(\varepsilon)
\]
where $h=1,\dots,n-1$. Finally, we have that ${\displaystyle \int\limits _{\mathbb{R}^{n}}\gamma(z)U(z)\frac{\partial U(z)}{\partial z_{h}}dz=0}$
because both $\gamma$ (see Remark \ref{rem:gammasym}) and $U$ are
symmetric with respect $z_{1},\dots,z_{n-1}$ while $\frac{\partial U(z)}{\partial z_{h}}$
is antisymmetric. This concludes the proof.
\end{proof}
We can now prove Lemma \ref{lem:Iexp}.
\begin{proof}[Proof of Lemma \ref{lem:Iexp}]
 We want to estimate 
\[
I_{\varepsilon}(W_{\varepsilon,\xi}+\phi_{\varepsilon,\xi})=J_{\varepsilon}(W_{\varepsilon,\xi}+\phi_{\varepsilon,\xi})+\frac{\omega^{2}}{2}G_{\varepsilon}(W_{\varepsilon,\xi}+\phi_{\varepsilon,\xi}),
\]
By Remark \ref{lem:Jeps} we have that 
\[
J_{\varepsilon}(W_{\varepsilon,\xi}+\phi_{\varepsilon,\xi})=J_{\varepsilon}(W_{\varepsilon,\xi})+o(\varepsilon)=C-\varepsilon\alpha H(\xi)+o(\varepsilon)
\]
$C^{1}$ uniformly with respect to $\xi\in\partial M$ as $\varepsilon$
goes to zero. Moreover by Lemma \ref{lem:Geps1} and by Lemma \ref{lem:Geps2}
we have that 
\[
G_{\varepsilon}(W_{\varepsilon,\xi}+\phi_{\varepsilon,\xi})=o(\varepsilon)
\]
$C^{1}$ uniformly with respect to $\xi\in\partial M$ and this concludes
the proof. 
\end{proof}

\subsection{Sketch of the proof of Theorem \ref{thm:main}}

In section \ref{sec:reduction}, Proposition \ref{prop:phieps} we
found a function $\phi_{\varepsilon,\xi}$ solving (\ref{eq:red1}).
By Lemma \ref{lem:sol2} we can solve (\ref{eq:red2}) once we have
a critical point of functional $\tilde{I}_{\varepsilon}$. At this
point by Lemma \ref{lem:Iexp} and by definition of $C^{1}$ stable
critical point (Def. \ref{def:stable}) we can complete the proof.

\appendix

\section{\label{app}Technical lemmas}
\begin{lem}
There exists $\varepsilon_{0}>0$ and $c>0$ such that, for any $\xi_{0}\in\partial M$
and for any $\varepsilon\in(0,\varepsilon_{0})$ it holds
\begin{equation}
\left\Vert \frac{\partial}{\partial y_{h}}Z_{\varepsilon,\xi(y)}^{l}\right\Vert _{\varepsilon}=O\left(\frac{1}{\varepsilon}\right),\ \ \left\Vert \frac{\partial}{\partial y_{h}}W_{\varepsilon,\xi(y)}\right\Vert _{\varepsilon}=O\left(\frac{1}{\varepsilon}\right),\label{eq:lemma61}
\end{equation}
for $h=1,\dots,n-1$, $l=1,\dots,n$
\end{lem}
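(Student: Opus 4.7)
The plan is to reduce the computation to a rescaled integral on $\mathbb{R}_+^n$ by working in Fermi coordinates centered at $\xi_0$ and exploiting the exponential decay of $U$ and $\varphi^l = \partial U/\partial z_l$. Fix $\xi_0 \in \partial M$ and write $\xi(y) = \exp^\partial_{\xi_0}(y)$ for $y \in B^{n-1}(0,r)$. For $x$ in a neighborhood of $\xi_0$, set $z = (\Psi^\partial_{\xi_0})^{-1}(x)$ and let $\mathcal{H}(y,x) = (\Psi^\partial_{\xi(y)})^{-1}(x)$, so $\mathcal{H}(0,x) = z$ and $y \mapsto \mathcal{H}(y,x)$ is smooth with derivatives bounded uniformly in $\xi_0 \in \partial M$ (this is the content of the ``Estorto'' lemmas already invoked in the proof of Lemma~\ref{lem:Geps2}). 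Differentiating under the chart gives
\[
\frac{\partial}{\partial y_h} W_{\varepsilon,\xi(y)}(x)\Big|_{y=0}
= \sum_{k=1}^{n}\left[\frac{1}{\varepsilon}\frac{\partial U}{\partial z_k}\!\left(\tfrac{z}{\varepsilon}\right)\chi_R(z) + U\!\left(\tfrac{z}{\varepsilon}\right)\frac{\partial \chi_R}{\partial z_k}(z)\right]\frac{\partial \mathcal{H}_k}{\partial y_h}(0,x),
\]
and similarly for $\partial_{y_h} Z^{l}_{\varepsilon,\xi(y)}$ with $U$ replaced by $\varphi^l$.

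Next I would plug this expression into $\|\cdot\|_\varepsilon^2 = \frac{1}{\varepsilon^n}\int_M [\varepsilon^2|\nabla_g u|^2 + (a-\omega^2)u^2]d\mu_g$, use the Fermi expansions (\ref{eq:g1})--(\ref{eq:g3}) to replace $g^{ij}$ and $\sqrt{g}$ by $\delta_{ij}+O(|z|)$ and $1+O(|z|)$, and then perform the rescaling $z = \varepsilon w$ with $dz = \varepsilon^n\, dw$. The dominant term $\frac{1}{\varepsilon}\partial_k U(z/\varepsilon)\chi_R(z)$ contributes to the potential part
\[
\frac{a-\omega^2}{\varepsilon^n}\int \frac{1}{\varepsilon^2}\Bigl(\partial_k U(z/\varepsilon)\Bigr)^2 \chi_R^2(z)\,|g|^{1/2}dz = \frac{1}{\varepsilon^2}\int_{\mathbb{R}_+^n}(a-\omega^2)|\partial_k U(w)|^2\,dw + o(\varepsilon^{-2}),
\]
while the gradient part, after the $\varepsilon^2$ weight, yields terms involving $\frac{1}{\varepsilon^2}\partial^2_{jk}U(z/\varepsilon)$ whose $L^2$ norm likewise rescales to give $\varepsilon^{-2}\int|\partial^2 U|^2 + O(\varepsilon^{-1})$. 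Since $U$ and all its derivatives decay exponentially, every such integral on $\mathbb{R}_+^n$ is finite, and the cutoff contributions $U(z/\varepsilon)\partial_k\chi_R(z)$ are supported away from the origin and are exponentially small. Adding these contributions gives $\|\partial_{y_h} W_{\varepsilon,\xi(y)}\|_\varepsilon^2 = O(\varepsilon^{-2})$, i.e.\ $\|\partial_{y_h} W_{\varepsilon,\xi(y)}\|_\varepsilon = O(\varepsilon^{-1})$.

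The argument for $Z^l_{\varepsilon,\xi(y)}$ is identical, since $\varphi^l = \partial U/\partial z_l$ enjoys the same exponential decay as $U$, so the same rescaling and bounds on $\partial_y \mathcal{H}_k$ apply verbatim. The only delicate point is the uniform control of $\partial_{y_h}\mathcal{H}_k(0,\cdot)$ over all $\xi_0 \in \partial M$ and of the exponential moments $\int w_j^2(\partial_k U)^2\,dw$ on the support of the (rescaled) cutoff, both of which follow from the compactness and smoothness of $\partial M$ and from the explicit decay $V(|z|) \sim |z|^{-(n-1)/2}e^{-|z|}$ recalled after (\ref{eq:rn}). The $O(\varepsilon^{-1})$ rate is sharp and matches precisely the scaling used in the applications of (\ref{eq:lemma61}) in Sections~\ref{sec:reduction} and~\ref{sec:functional}.
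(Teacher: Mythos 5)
The paper states this lemma in the appendix without any proof, so there is no argument of the authors to compare against; your sketch supplies the missing verification. Your approach is correct and is the natural one: you differentiate the composition $U_\varepsilon\circ\mathcal{H}$ and $\chi_R\circ\mathcal{H}$ in the $y$-variable using the chain rule, isolate the dominant term $\frac{1}{\varepsilon}\partial_{z_k}U(z/\varepsilon)\chi_R(z)\,\partial_{y_h}\mathcal{H}_k(0,x)$, and then use (i) uniform boundedness of $\partial_{y_h}\mathcal{H}_k$ (as recorded in the ``Estorto'' lemmas plus compactness of $\partial M$), (ii) the expansions (\ref{eq:g1})--(\ref{eq:g3}) to control the metric factors, and (iii) the rescaling $z=\varepsilon w$ together with the exponential decay of $U$, $\nabla U$, $D^2U$ to get
\[
\frac{1}{\varepsilon^n}\int_M\bigl[\varepsilon^2|\nabla_g(\cdot)|^2+(a-\omega^2)(\cdot)^2\bigr]\,d\mu_g
=O(\varepsilon^{-2}),
\]
whence the $O(\varepsilon^{-1})$ bound on the norm. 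The cutoff terms $U(z/\varepsilon)\partial_k\chi_R(z)$ are exponentially small (supported where $|z|\ge R/2$, so $|z/\varepsilon|\ge R/(2\varepsilon)$), and the case of $Z^l_{\varepsilon,\xi(y)}$ goes through identically since $\varphi^l=\partial U/\partial z_l$ has the same decay. One minor cosmetic point: in your gradient computation you should also account for the term where $\nabla_x$ falls on $\partial_{y_h}\mathcal{H}_k(0,x)$ rather than on $U$; this is harmless since that derivative is a smooth bounded function of $x$ and the resulting contribution is of strictly lower order ($O(\varepsilon^{-1})$ in the squared norm rather than $O(\varepsilon^{-2})$). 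With that remark the proof is complete.
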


\begin{lem}
\label{lem:e5}Let us consider the functions 
\[
\tilde{v}_{\varepsilon,\xi}(z)=\left\{ \begin{array}{cl}
\psi(W_{\varepsilon,\xi})\left(\Psi_{\xi}^{\partial}(\varepsilon z)\right) & \text{ for }z\in D^{+}(R/\varepsilon)\\
\\
0 & \text{ for }z\in\mathbb{R}^{3}\smallsetminus D^{+}(R/\varepsilon)
\end{array}\right.
\]
Where $D^{+}(r/\varepsilon)=\left\{ z=(\bar{z},z_{n}),\ \bar{z}\in\mathbb{R}^{n-1},|\bar{z}|<r/\varepsilon,\ 0\le z_{n}<R/\varepsilon)\right\} $.
Then there exists a constant $c>0$ such that 
\[
\|\tilde{v}_{\varepsilon,\xi}(z)\|_{L^{2^{*}}(\mathbb{R}_{+}^{n})}\le c\varepsilon^{2}.
\]
Furthermore, take a sequence $\varepsilon_{n}\rightarrow0$, up to
subsequences, ${\displaystyle \left\{ \frac{1}{\varepsilon_{n}^{2}}\tilde{v}_{\varepsilon_{n},\xi}\right\} _{n}}$
converges weakly in $L^{2^{*}}(\mathbb{R}_{+}^{n})$ as $\varepsilon$
goes to $0$ to a function $\gamma\in D^{1,2}(\mathbb{R}^{3})$. The
function $\gamma$ solves, in a weak sense, the equation 
\begin{equation}
-\Delta\gamma=qU^{2}\text{ in }\mathbb{R}_{+}^{n}\label{eq:egamma-sol}
\end{equation}
\end{lem}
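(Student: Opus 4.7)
The plan is three-fold: establish uniform bounds for $\tilde{v}_{\varepsilon,\xi}/\varepsilon^{2}$, extract a weak limit, and pass to the limit in the rescaled equation satisfied by $\psi(W_{\varepsilon,\xi})$.

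For the first step, I would apply estimate (\ref{eq:phiweps-1}) with $\varphi=0$ to get $\|\psi(W_{\varepsilon,\xi})\|_{H}\le c\varepsilon^{(n+2)/2}$. Sobolev embedding then yields $\|\psi(W_{\varepsilon,\xi})\|_{L^{2^{*}}_{g}}\le c\varepsilon^{(n+2)/2}$, and the change of variables $x=\Psi_{\xi}^{\partial}(\varepsilon z)$, whose Jacobian is $\varepsilon^{n}\sqrt{g_{\xi}(\varepsilon z)}$, gives
\[
\|\tilde{v}_{\varepsilon,\xi}\|_{L^{2^{*}}(\mathbb{R}_{+}^{n})}^{2^{*}}\le c\,\varepsilon^{-n}\|\psi(W_{\varepsilon,\xi})\|_{L^{2^{*}}_{g}}^{2^{*}}\le c\,\varepsilon^{-n+2^{*}(n+2)/2};
\]
a direct computation ($-n+2^{*}(n+2)/2$ equals $12$ when $n=3$ and $8$ when $n=4$) shows this exponent is $2\cdot 2^{*}$, whence $\|\tilde{v}_{\varepsilon,\xi}\|_{L^{2^{*}}}\le c\varepsilon^{2}$. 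The identical substitution applied to the gradient, combined with $\|\nabla_{g}\psi(W_{\varepsilon,\xi})\|_{L^{2}_{g}}\le\|\psi(W_{\varepsilon,\xi})\|_{H}$, yields $\|\nabla\tilde{v}_{\varepsilon,\xi}\|_{L^{2}(\mathbb{R}_{+}^{n})}\le c\varepsilon^{2}$.

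For the second step, set $\gamma_{\varepsilon}:=\tilde{v}_{\varepsilon,\xi}/\varepsilon^{2}$: Step~1 produces $\|\gamma_{\varepsilon}\|_{L^{2^{*}}}+\|\nabla\gamma_{\varepsilon}\|_{L^{2}}\le c$, so Banach--Alaoglu provides a sequence $\varepsilon_{n}\to 0$ and $\gamma\in D^{1,2}$ with $\gamma_{\varepsilon_{n}}\rightharpoonup\gamma$ weakly in $L^{2^{*}}$ and $\nabla\gamma_{\varepsilon_{n}}\rightharpoonup\nabla\gamma$ weakly in $L^{2}$. For the third step, since $v=\psi(W_{\varepsilon,\xi})$ satisfies (\ref{eq:ei-N}), reading through Fermi coordinates at $\xi$ and rescaling $y=\varepsilon z$ gives
\[
-\Delta_{g_{\varepsilon}}\gamma_{\varepsilon}+\varepsilon^{2}\bigl(1+q^{2}U^{2}\chi_{R}^{2}(\varepsilon z)\bigr)\gamma_{\varepsilon}=qU^{2}\chi_{R}^{2}(\varepsilon z),\qquad g_{\varepsilon}(z):=g_{\xi}(\varepsilon z).
\]
Testing against any $\phi\in C_{c}^{\infty}(\mathbb{R}_{+}^{n})$: on $\mathrm{supp}\,\phi$ the coefficients $g_{\varepsilon}^{ij}\to\delta_{ij}$ and $\sqrt{g_{\varepsilon}}\to 1$ uniformly; $\chi_{R}(\varepsilon z)\to 1$ pointwise, so by dominated convergence the right-hand side converges to $q\int U^{2}\phi$; the $\varepsilon^{2}$ lower-order term is killed by the uniform $L^{2^{*}}$ bound on $\gamma_{\varepsilon}$; and the principal part passes to the limit by weak convergence of $\nabla\gamma_{\varepsilon_{n}}$ combined with uniform convergence of the metric coefficients. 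The limit identity $\int\nabla\gamma\cdot\nabla\phi=q\int U^{2}\phi$ is exactly the weak form of $-\Delta\gamma=qU^{2}$ in $\mathbb{R}_{+}^{n}$.

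The only technical care concerns the bookkeeping of exponents in Step~1 and the metric perturbation in Step~3. Both are standard once the rescaling is written out carefully, and neither constitutes a genuine obstacle; the Neumann (resp.\ Dirichlet) boundary condition does not intervene, since the lemma asks for the equation to hold only in the weak interior sense.
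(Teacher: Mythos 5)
Your proposal is correct and follows essentially the same strategy as the paper: establish a uniform $\varepsilon^{2}$ bound, rescale, extract a weak limit, and pass to the limit in the rescaled equation. The route you take to the bound is slightly different — you invoke the pre-established estimate \eqref{eq:phiweps-1} with $\varphi=0$ and then change variables, while the paper tests the rescaled equation for $\tilde v_{\varepsilon,\xi}$ against itself (using $\tilde v_{\varepsilon,\xi}\ge 0$ and H\"older) to get the $D^{1,2}$ bound directly — but the two computations are the same at heart, so this is a matter of packaging rather than a genuinely different argument.

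The one place where you are less careful than the paper, and should tighten up, is the claim $\|\nabla\tilde v_{\varepsilon,\xi}\|_{L^{2}(\mathbb R^{n}_{+})}\le c\varepsilon^{2}$ and the subsequent extraction of a weak limit of $\nabla\gamma_{\varepsilon_{n}}$ in $L^{2}(\mathbb R^{n}_{+})$. The function $\tilde v_{\varepsilon,\xi}$ is set to zero abruptly outside $D^{+}(R/\varepsilon)$, so as a function on $\mathbb R^{n}_{+}$ its distributional gradient contains a singular part supported on $\partial D^{+}(R/\varepsilon)$ and is therefore not an $L^{2}$ function; your gradient estimate only gives a bound in $D^{1,2}\bigl(D^{+}(R/\varepsilon)\bigr)$. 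The paper resolves this by introducing the auxiliary function $v_{\varepsilon,\xi}(z)=\tilde v_{\varepsilon,\xi}(z)\,\chi_{R}(\varepsilon z)$, which has the smooth cutoff built in and hence genuinely belongs to $D^{1,2}(\mathbb R^{n}_{+})$ with the same $\varepsilon^{2}$ bound, extracts the weak $D^{1,2}$ limit $\gamma$ from $v_{\varepsilon_{n},\xi}/\varepsilon_{n}^{2}$, and then observes that for any fixed compact $K\subset\mathbb R^{n}_{+}$ one has $v_{\varepsilon_{n},\xi}\equiv\tilde v_{\varepsilon_{n},\xi}$ on $K$ for $n$ large, so the $L^{2^{*}}$ weak limits coincide. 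This is a cosmetic fix — your argument otherwise goes through unchanged since you test against $\phi\in C_{c}^{\infty}(\mathbb R^{n}_{+})$ — but it is precisely the reason the paper introduces both $\tilde v$ and $v$, and omitting it leaves Step 2 of your proof literally false as written.
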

\begin{proof}
We prove the Lemma for Problem (\ref{eq:Pnn}), being the Problem
(\ref{eq:Pdn}) completely analogous. By definition of $\tilde{v}_{\varepsilon,\xi}(z)$
and by (\ref{eq:Pnn}) we have, for all $z\in D^{+}(r/\varepsilon)$,
\begin{multline}
-\sum_{ij}\partial_{j}\left(|g_{\xi}(\varepsilon z)|^{1/2}g_{\xi}^{ij}(\varepsilon z)\partial_{i}\tilde{v}_{\varepsilon,\xi}(z)\right)=\\
=\varepsilon^{2}|g_{\xi}(\varepsilon z)|^{1/2}\left\{ qU^{2}(z)\chi_{r}^{2}(\varepsilon z)-\left[1+q^{2}U^{2}(z)\chi_{R}^{2}(\varepsilon z)\right]\tilde{v}_{\varepsilon,\xi}(z)\right\} \label{eq:e11-1}
\end{multline}
By (\ref{eq:e11-1}), and remarking that $\tilde{v}_{\varepsilon,\xi}(z)\ge0$
we have 
\begin{align*}
 & \|\tilde{v}_{\varepsilon,\xi}(z)\|_{D^{1,2}\left(D^{+}(r/\varepsilon)\right)}^{2}\le C\int\limits _{D^{+}(R/\varepsilon)}|g_{\xi}(\varepsilon z)|^{1/2}g_{\xi}^{ij}(\varepsilon z)\partial_{i}\tilde{v}_{\varepsilon,\xi}(z)\partial_{j}\tilde{v}_{\varepsilon,\xi}(z)dz\\
 & =C\varepsilon^{2}\int\limits _{D^{+}(R/\varepsilon)}|g_{\xi}(\varepsilon z)|^{1/2}\left\{ qU^{2}(z)\chi_{R}^{2}(\varepsilon z)\tilde{v}_{\varepsilon,\xi}(z)-\left[1+q^{2}U^{2}(z)\chi_{R}^{2}(\varepsilon z)\right]\tilde{v}_{\varepsilon,\xi}^{2}(z)\right\} dz\\
 & \le C\varepsilon^{2}\int\limits _{D^{+}(R/\varepsilon)}|g_{\xi}(\varepsilon z)|^{1/2}qU^{2}(z)\chi_{R}^{2}(\varepsilon|z|)\tilde{v}_{\varepsilon,\xi}(z)dz\\
 & \le C\varepsilon^{2}\|\tilde{v}_{\varepsilon,\xi}(z)\|_{L^{2^{*}}\left(D^{+}(R/\varepsilon)\right)}\|U\|_{L^{\frac{4n}{n+2}}}^{2}\le C\varepsilon^{2}\|\tilde{v}_{\varepsilon,\xi}(z)\|_{D^{1,2}\left(D^{+}(R/\varepsilon)\right)}
\end{align*}
Thus we have 
\begin{equation}
\|\tilde{v}_{\varepsilon,\xi}(z)\|_{D^{1,2}\left(D^{+}(R/\varepsilon)\right)}\le C\varepsilon^{2}\text{ and }|\tilde{v}_{\varepsilon,\xi}(z)|_{L^{2^{*}}(\mathbb{R}_{+}^{n})}\le C\varepsilon^{2}.\label{eq:e13}
\end{equation}
By (\ref{eq:e13}), if $\varepsilon_{n}$ is a sequence which goes
to zero, the sequence $\left\{ \frac{1}{\varepsilon_{n}^{2}}\tilde{v}_{\varepsilon_{n},\xi}\right\} _{n}$
is bounded in $L^{2^{*}}(\mathbb{R}_{+}^{n})$. Then, up to subsequence,
$\left\{ \frac{1}{\varepsilon_{n}^{2}}\tilde{v}_{\varepsilon_{n},\xi}\right\} _{n}$
converges to some $\tilde{\gamma}\in L^{2^{*}}(\mathbb{R}_{+}^{n})$
weakly in $L^{2^{*}}(\mathbb{R}_{+}^{n})$. 

Moreover, by (\ref{eq:e11-1}), for any $\varphi\in C_{0}^{\infty}(\mathbb{R}_{+}^{n})$,
it holds 
\begin{multline}
\int\limits _{\text{supp }\varphi}\sum_{ij}|g_{\xi}(\varepsilon z)|^{1/2}g_{\xi}^{ij}(\varepsilon z)\partial_{i}\frac{\tilde{v}_{\varepsilon,\xi}(z)}{\varepsilon_{n}^{2}}\partial_{j}\varphi(z)dz=\\
\int\limits _{\text{supp }\varphi}\left\{ qU^{2}(z)\chi_{r}^{2}(\varepsilon|z|)-\left[1+q^{2}U^{2}(z)\chi_{R}^{2}(\varepsilon z)\right]\tilde{v}_{\varepsilon,\xi}(z)\right\} |g_{\xi}(\varepsilon z)|^{1/2}\varphi(z)dz.\label{eq:213bis}
\end{multline}
Consider now the functions 
\[
v_{\varepsilon,\xi}(z):=\psi(W_{\varepsilon,\xi})\left(\Psi_{\xi}^{\partial}(\varepsilon z)\right)\chi_{R}(\varepsilon z)=\tilde{v}_{\varepsilon,\xi}(z)\chi_{r}(\varepsilon z)\text{ for }z\in\mathbb{R}_{+}^{n}.
\]
We have immediately that $v_{\varepsilon,\xi}(z)$ is bounded in $D^{1,2}(\mathbb{R}_{+}^{n})$,
thus the sequence $\left\{ \frac{1}{\varepsilon_{n}^{2}}v_{\varepsilon_{n},\xi}\right\} _{n}$
converges to some $\gamma\in D^{1,2}(\mathbb{R}^{3})$ weakly in $D^{1,2}(\mathbb{R}_{+}^{n})$
and in $L^{2^{*}}(\mathbb{R}_{+}^{n})$. Finally, for any compact
set $K\subset\mathbb{R}_{+}^{n}$ eventually $v_{\varepsilon_{n},\xi}\equiv\tilde{v}_{\varepsilon_{n},\xi}$
on $K$. So it is easy to see that $\tilde{\gamma}=\gamma$.

We recall that $|g_{\xi}(\varepsilon z)|^{1/2}=1+O(\varepsilon|z|)$
and $g_{\xi}^{ij}(\varepsilon z)=\delta_{ij}+O(\varepsilon|z|)$ so,
by the weak convergence of $\left\{ \frac{1}{\varepsilon_{n}^{2}}v_{\varepsilon_{n},\xi}\right\} _{n}$
in $D^{1,2}(\mathbb{R}_{+}^{n})$, for any $\varphi\in C_{0}^{\infty}(\mathbb{R}_{+}^{n})$
we get 
\begin{multline}
\int\limits _{\text{supp }\varphi}\sum_{ij}|g_{\xi}(\varepsilon_{n}z)|^{1/2}g_{\xi}^{ij}(\varepsilon_{n}z)\partial_{i}\frac{\tilde{v}_{\varepsilon_{n},\xi}(z)}{\varepsilon_{n}^{2}}\partial_{j}\varphi(z)dz\\
=\int\limits _{\text{supp }\varphi}\sum_{ij}|g_{\xi}(\varepsilon_{n}z)|^{1/2}g_{\xi}^{ij}(\varepsilon_{n}z)\partial_{i}\frac{v_{\varepsilon_{n},\xi}(z)}{\varepsilon_{n}^{2}}\partial_{j}\varphi(z)dz\\
\rightarrow\int\limits _{\mathbb{R}^{3}}\sum_{i}\partial_{i}\gamma(z)\partial_{i}\varphi(z)dz\text{ as }n\rightarrow\infty.\label{eq:e14bis}
\end{multline}
Thus by (\ref{eq:213bis}) and by (\ref{eq:e14bis}) and because $\left\{ \frac{1}{\varepsilon_{n}^{2}}\tilde{v}_{\varepsilon_{n},\xi}\right\} _{n}$
converges to $\gamma$ weakly in $L^{2^{*}}(\mathbb{R}_{+}^{n})$
we get 
\[
\int\limits _{\mathbb{R}_{+}^{n}}\sum_{i}\partial_{i}\gamma(z)\partial_{i}\varphi(z)dz=q\int\limits _{\mathbb{R}_{+}^{n}}U^{2}(z)\varphi(z)dz\text{ for all }\varphi\in C_{0}^{\infty}(\mathbb{R}_{+}^{n}).
\]
So, finally, up to subsequences, ${\displaystyle \left\{ \frac{1}{\varepsilon_{n}^{2}}\tilde{v}_{\varepsilon_{n},\xi}\right\} _{n}}$
converges to $\gamma$, weakly in $L^{2^{*}}(\mathbb{R}_{+}^{n})$
and the function $\gamma\in D^{1,2}(\mathbb{R}_{+}^{n})$ is a weak
solution of $-\Delta\gamma=qU^{2}$ in $\mathbb{R}_{+}^{n}$.\end{proof}
\begin{rem}
\label{rem:gammasym}We remark that $\gamma$ is positive and decays
exponentially at infinity with its first derivative because it solves
${\displaystyle -\Delta\gamma=qU^{2}}$ in $\mathbb{R}_{+}^{n}$.
Moreover its is symmetric with respect to the first $n-1$ variables. \end{rem}
\begin{defn}
\label{def:Estorto}Let $\xi_{0}\in\partial M$. We introduce the
functions ${\mathcal E}$ and $\tilde{\mathcal{E}}$ as follows.

\[
\mathcal{E}(y,x)=\left(\exp_{\xi(y)}^{\partial}\right)^{-1}(x)=\left(\exp_{\exp_{\xi_{0}}^{\partial}y}^{\partial}\right)^{-1}(\exp_{\xi_{0}}^{\partial}\bar{\eta})=\tilde{\mathcal{E}}(y,\bar{\eta})
\]
where $x,\xi(y)\in\partial M$, $y,\bar{\eta}\in B(0,R)\subset\mathbb{R}^{n-1}$
and $\xi(y)=\exp_{\xi_{0}}^{\partial}y$, $x=\exp_{\xi_{0}}^{\partial}\bar{\eta}$.
$ $Using Fermi coordinates, in a similar way we define
\[
\mathcal{H}(y,x)=\left(\psi_{\xi(y)}^{\partial}\right)^{-1}(x)=\left(\psi_{\exp_{\xi_{0}}^{\partial}y}^{\partial}\right)^{-1}\left(\psi_{\xi_{0}}^{\partial}(\bar{\eta},\eta_{n})\right)=\tilde{\mathcal{H}}(y,\bar{\eta},\eta_{n})=(\tilde{\mathcal{E}}(y,\bar{\eta}),\eta_{n})
\]
where $x\in M$, $\eta=(\bar{\eta},\eta_{n})$, with $\bar{\eta}\in B(0,R)\subset\mathbb{R}^{n-1}$
and $0\le\eta_{n}<R$, $\xi(y)=\exp_{\xi_{0}}^{\partial}y\in\partial M$
and $x=\psi_{\xi_{0}}^{\partial}(\eta)$. \end{defn}
\begin{lem}
\label{lem:Estorto}It holds
\[
\frac{\partial\tilde{\mathcal{E}}_{k}}{\partial y_{j}}(0,0)=-\delta_{jk}\text{ for }j,k=1,\dots,n-1
\]
\end{lem}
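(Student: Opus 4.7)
The plan is to differentiate the defining identity of $\tilde{\mathcal{E}}$ at the base point and exploit the fact that the exponential map on $\partial M$ is a local diffeomorphism whose differential at the origin is the identity. The key step is to convert the identity into an equation in $\mathbb{R}^{n-1}$ so that the ordinary implicit function theorem / chain rule applies. Concretely I would rewrite the defining relation
\[
\exp_{\xi(y)}^{\partial}\bigl(\tilde{\mathcal{E}}(y,\bar{\eta})\bigr)=\exp_{\xi_{0}}^{\partial}(\bar{\eta})
\]
by composing on the left with $(\exp_{\xi_{0}}^{\partial})^{-1}$, which is a smooth diffeomorphism of a neighbourhood of $\xi_{0}\in\partial M$ onto a neighbourhood of $0\in\mathbb{R}^{n-1}$.

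Introducing the auxiliary map
\[
\tilde{F}(y,v):=\bigl(\exp_{\xi_{0}}^{\partial}\bigr)^{-1}\bigl(\exp_{\xi(y)}^{\partial}(v)\bigr),
\]
the identity above reads $\tilde{F}(y,\tilde{\mathcal{E}}(y,\bar{\eta}))=\bar{\eta}$. The central observation — really the only input beyond the chain rule — is that both partial differentials of $\tilde F$ at the origin are the identity. Indeed, $\tilde{F}(0,v)=v$ since both exponentials are based at $\xi_{0}$, so $\partial_{v}\tilde{F}(0,0)=\mathrm{Id}$; and $\tilde{F}(y,0)=(\exp_{\xi_{0}}^{\partial})^{-1}(\xi(y))=y$ by the very definition $\xi(y)=\exp_{\xi_{0}}^{\partial}(y)$, so $\partial_{y}\tilde{F}(0,0)=\mathrm{Id}$ as well. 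Differentiating $\tilde{F}(y,\tilde{\mathcal{E}}(y,\bar{\eta}))=\bar{\eta}$ in $y_{j}$ at $(y,\bar{\eta})=(0,0)$ and reading off the $k$-th component therefore yields
\[
\delta_{jk}+\frac{\partial\tilde{\mathcal{E}}_{k}}{\partial y_{j}}(0,0)=0,
\]
which is the required identity.

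I do not anticipate a genuine obstacle here: the argument is essentially the statement that, in normal coordinates at $\xi_{0}$, the point $\xi_{0}$ itself is seen from $\xi(y)=\exp_{\xi_{0}}^{\partial}(y)$ with initial tangent vector $-y+O(|y|^{2})$. The only mild subtlety is the implicit choice of orthonormal frame used to identify $T_{\xi(y)}\partial M$ with $\mathbb{R}^{n-1}$; but since we only evaluate at $y=0$, where this frame coincides with the fixed frame at $\xi_{0}$, it plays no role in the first order expansion. No further geometric input (Jacobi fields, curvature, etc.) is needed.
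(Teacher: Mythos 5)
Your proof is correct and follows essentially the same route as the paper: the paper also introduces the auxiliary map $F(y,\bar\eta)=(\exp_{\xi_0}^\partial)^{-1}\circ\exp_{\xi(y)}^\partial(\bar\eta)$, observes that $F_\eta'(0,\cdot)=\mathrm{Id}$ and $F_y'(\cdot,0)=\mathrm{Id}$, and deduces the claim from the inverse of the Jacobian of $\Gamma(y,\bar\eta)=(y,F(y,\bar\eta))$. Your version simply differentiates the identity $F(y,\tilde{\mathcal{E}}(y,\bar\eta))=\bar\eta$ directly rather than inverting a block-triangular matrix, which is a minor streamlining of the same computation.
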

\begin{proof}
We recall that $\tilde{\mathcal{E}}(y,\bar{\eta})=\left(\exp_{\xi(y)}^{\partial}\right)^{-1}(\exp_{\xi_{0}}^{\partial}\bar{\eta})$.
Let us introduce, for $y,\bar{\eta}\in B(0,R)\subset\mathbb{R}^{n-1}$
\begin{align*}
F(y,\bar{\eta}) & =\left(\exp_{\xi_{0}}^{\partial}\right)^{-1}\left(\exp_{\xi(y)}^{\partial}(\bar{\eta})\right)\\
\Gamma(y,\bar{\eta}) & =\left(y,F(y,\bar{\eta})\right).
\end{align*}
We notice that $\Gamma^{-1}=(y,\tilde{\mathcal{E}}(y,\bar{\eta}))$.
We can easily compute the derivative of $\Gamma$. We have 
\[
\Gamma'(\hat{y},\hat{\eta})[\tilde{y},\tilde{\eta}]=\left(\begin{array}{cc}
\text{Id}_{\mathbb{R}^{n-1}} & 0\\
F_{y}'(\hat{y},\hat{\eta}) & F_{\eta}'(\hat{y},\hat{\eta})
\end{array}\right)\left(\begin{array}{c}
\tilde{y}\\
\tilde{\eta}
\end{array}\right),
\]
thus
\[
\left(\Gamma^{-1}\right)^{\prime}(\hat{y},\hat{\eta})[\tilde{y},\tilde{\eta}]=\left(\begin{array}{cc}
\text{Id}_{\mathbb{R}^{n-1}} & 0\\
-\left(F_{\eta}'(\hat{y},\hat{\eta})\right)^{-1}F_{y}'(\hat{y},\hat{\eta}) & \left(F_{\eta}'(\hat{y},\hat{\eta})\right)^{-1}
\end{array}\right)\left(\begin{array}{c}
\tilde{y}\\
\tilde{\eta}
\end{array}\right)
\]
Now, by direct computation we have that
\[
F_{\eta}'(0,\hat{\eta})=\text{Id}_{\mathbb{R}^{n-1}}\text{ and }F_{y}'(\hat{y},0)=\text{Id}_{\mathbb{R}^{n-1}},
\]
so $\frac{\partial\tilde{\mathcal{E}}_{k}}{\partial y_{j}}(0,0)
=\left(-\left(F_{\eta}'(0,0)\right)^{-1}F_{y}'(0,0)\right)_{jk}=-\delta_{jk}$. \end{proof}
\begin{lem}
\label{lem:Estorto2}We have that 
\begin{align*}
\tilde{\mathcal{H}}(0,\bar{\eta},\eta_{n})
= & (\bar{\eta},\eta_{n})\text{ for }\bar{\eta}\in\mathbb{R}^{n-1},\eta_{n}\in\mathbb{R}_{+}\\
\frac{\partial\tilde{\mathcal{H}}_{k}}{\partial y_{j}}(0,0,\eta_{n})
= & -\delta_{jk}\text{ for }j,k=1,\dots,n-1,\eta_{n}\in\mathbb{R}_{+}\\
\frac{\partial\tilde{\mathcal{H}}_{n}}{\partial y_{j}}(y,\bar{\eta},\eta_{n})= & 0\text{ for }j
=1,\dots,n-1,y,\bar{\eta}\in\mathbb{R}^{n-1},\eta_{n}\in\mathbb{R}_{+}
\end{align*}
\end{lem}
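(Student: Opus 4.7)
The plan is to exploit the product structure $\tilde{\mathcal{H}}(y,\bar{\eta},\eta_n) = (\tilde{\mathcal{E}}(y,\bar{\eta}),\eta_n)$ already built into the definition of Fermi coordinates. Geometrically, changing the base point $\xi(y)\in\partial M$ of the chart $\psi^{\partial}_{\xi(y)}$ affects only the tangential part of the coordinates of a point $x\in M$ (captured by the map $\tilde{\mathcal{E}}$ on $\partial M$ from Definition \ref{def:Estorto}), while the normal coordinate $\eta_n = d_g(x,\partial M)$ is an intrinsic invariant of $x$ and is therefore independent of the choice of chart. With this splitting in hand, each of the three assertions reduces to a one-line verification.

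For the first equality, at $y=0$ one has $\xi(0)=\exp^{\partial}_{\xi_0}(0)=\xi_0$, so $\tilde{\mathcal{E}}(0,\bar{\eta}) = (\exp^{\partial}_{\xi_0})^{-1}(\exp^{\partial}_{\xi_0}(\bar{\eta})) = \bar{\eta}$, and concatenation with the unchanged normal component yields $\tilde{\mathcal{H}}(0,\bar{\eta},\eta_n)=(\bar{\eta},\eta_n)$. For the second claim, since $\tilde{\mathcal{H}}_k = \tilde{\mathcal{E}}_k$ for $k=1,\dots,n-1$ and these components do not depend on $\eta_n$, I invoke Lemma \ref{lem:Estorto} directly to conclude $\partial_{y_j}\tilde{\mathcal{H}}_k(0,0,\eta_n) = \partial_{y_j}\tilde{\mathcal{E}}_k(0,0) = -\delta_{jk}$. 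For the third claim, since $\tilde{\mathcal{H}}_n(y,\bar{\eta},\eta_n) \equiv \eta_n$ is constant in $y$, differentiating in $y_j$ gives zero identically.

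The only conceptual point requiring care is the invariance of the normal coordinate underlying the splitting $\tilde{\mathcal{H}}=(\tilde{\mathcal{E}},\eta_n)$: this comes from the uniqueness of the closest boundary point $\bar{\xi}$ appearing in the definition of Fermi coordinates, together with the fact that $d_g(x,\partial M)$ has no dependence on the auxiliary base point used to parametrize $\bar{\xi}$. Once this is noted, the lemma is essentially a bookkeeping statement separating the tangential geometry (handled by Lemma \ref{lem:Estorto}) from the trivial behavior in the normal direction, and there is no substantive obstacle.
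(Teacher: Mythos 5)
Your proof is correct and follows essentially the same route as the paper: read off the product decomposition $\tilde{\mathcal{H}}(y,\bar{\eta},\eta_n)=(\tilde{\mathcal{E}}(y,\bar{\eta}),\eta_n)$ from Definition~\ref{def:Estorto}, get the first two claims from $\tilde{\mathcal{E}}(0,\bar{\eta})=\bar{\eta}$ and Lemma~\ref{lem:Estorto}, and get the third from the fact that the $n$-th component is just $\eta_n$. If anything, your write-up is slightly more explicit than the paper's (which merely states the third claim follows since the tangential components are independent of $\eta_n$), and your observation that $\tilde{\mathcal{H}}_n\equiv\eta_n$ makes the argument for the third claim cleaner.
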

\begin{proof}
The first two claim follows immediately by Definition \ref{def:Estorto}
and Lemma \ref{lem:Estorto}. For the last claim, observe that $\tilde{\mathcal{H}}_{k}(y,\bar{\eta},\eta_{n})
=\tilde{\mathcal{E}}_{k}(y,\bar{\eta})$
which does not depends on $\eta_{n}$ as well as its derivatives.
\end{proof}

\end{document}